\newtheorem{theorem}{Theorem}[section]
\newtheorem{proposition}[theorem]{Proposition}
\newtheorem{example}[theorem]{Example}
\newtheorem{remark}[theorem]{Remark}
\def\R{\mathbb{R}}
\def\C{\mathbb{C}}
\def\Cx{\mathbb{C}^{\times}}
\title{Hamiltonian flow between standard module Lagrangians}
\author{Yujin Tong}
\begin{document}
\maketitle

\begin{abstract}
In Aganagic's Fukaya category of the Coulomb branch of quiver gauge theory, the $T_\theta$-brane algebra gives a symplectic realization of the Khovanov--Lauda--Rouquier--Webster (KLRW) algebra,  
where each standard module is known to admit two Lagrangian realizations: the `U'-shaped $T$-brane and the step $I$-brane.  
We show that the latter arises as the infinite-time limit of the Hamiltonian evolution of the former, thus serving as a generalized thimble.  
This provides a geometric realization of the categorical isomorphism previously established through holomorphic disc counting.
\end{abstract}

\section{Introduction}

The Khovanov--Lauda--Rouquier--Webster (KLRW) algebras provide a diagrammatic categorification of tensor product representations of quantum groups and underpin a wide range of advances in higher representation theory and low-dimensional topology \cite{KL1, KL2}. Among their structural features, \emph{standard modules}---introduced by B. Webster---play the role of Verma-like building blocks \cite{Webster1}. Conceptually, they are the smallest quotients of projectives obtained by modding out diagrams with forbidden crossings, and thus form the natural test objects for braid and tangle functors and for dualities in the KLRW framework~\cite{Webster1, Webster2}.
\begin{figure}[H]
\centering
  \includegraphics[width=0.4\textwidth]{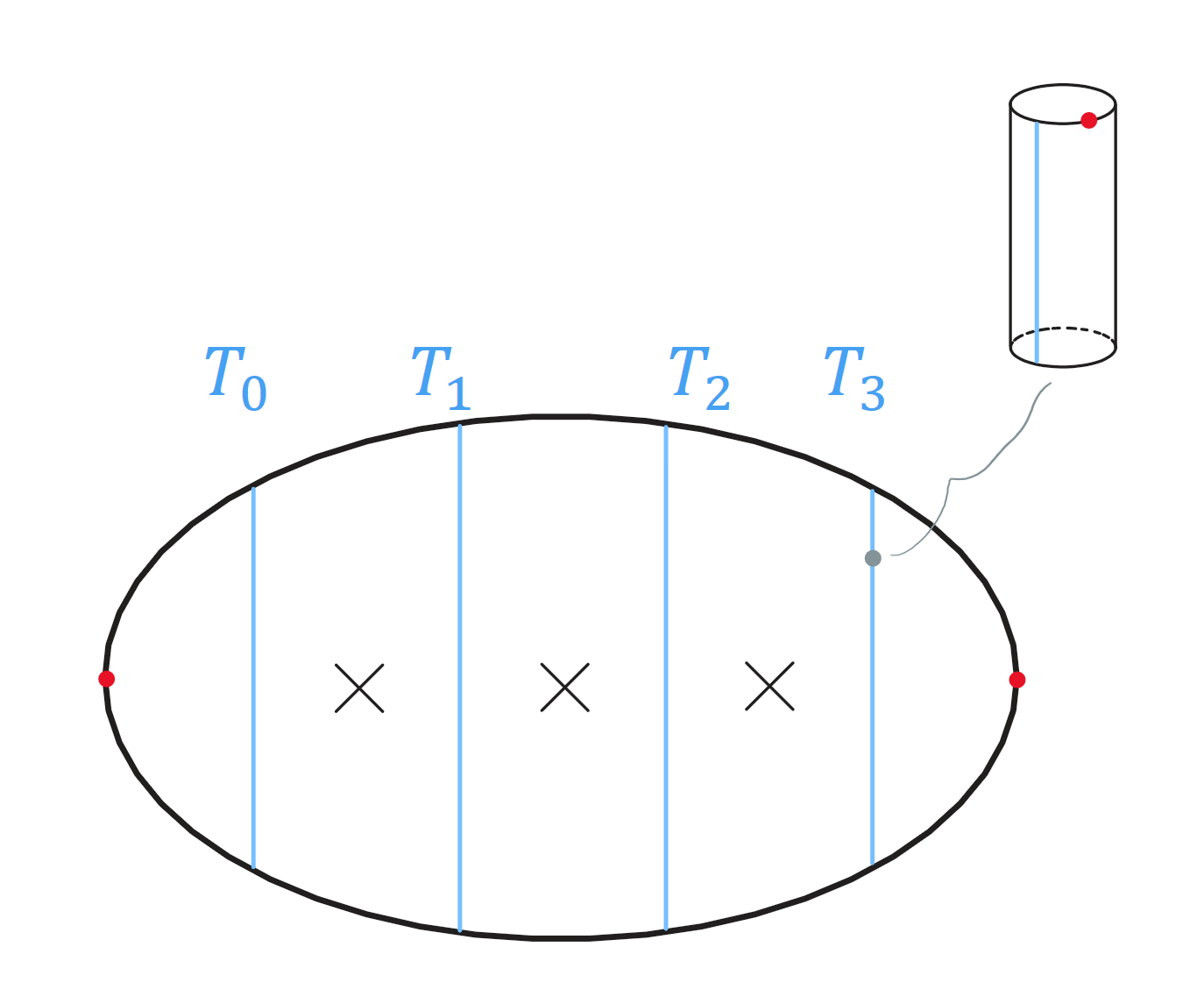}
  \caption{The $T_\theta$-branes representing the generators of the KLRW algebra.}
  \label{0}
\end{figure}

A development due to M. Aganagic is a \emph{symplectic realization} of this algebraic picture \cite{Mina}. In the Fukaya category of the Coulomb branch of quiver gauge theory, 
the \emph{$T_\theta$-brane algebra}, defined by the Lagrangian intersections and holomorphic disc operations, is isomorphic to the KLRW algebra \cite{ADLSZ}; this provides a concrete symplectic model of the diagrammatic generators and their relations.
The branes are illustrated in Fig.~\ref{0}, where the black $\times$ and red dots represent punctures and stops on the manifold, which we will introduce in detail later.

Within this Fukaya-theoretic model, one can ask how algebraically distinguished objects are realized as Lagrangian branes. 
In Aganagic's Fukaya category, the standard modules admit realizations as the Yoneda embeddings of two classes of seemingly very different Lagrangian submanifolds:
\begin{itemize}
    \item the Lagrangian obtained as the symplectic connected sum (equivalently, the mapping cone) of the morphism ${T_{i+1}\to T_i}$, denoted by $U$ in Fig.~\ref{3a}, where we rotate Fig.~\ref{0} by $90^\circ$ for visual clarity;
    \item the \emph{step~$I$--brane} hanging on the $i$th puncture, which we denote by $J$ in Fig.~\ref{3b}. (Here the $T$- and $I$-brane denotes the corresponding shape of the Lagrangians in the fibers.)
\end{itemize}

\begin{figure}[H]
\centering
\begin{subfigure}{0.49\textwidth}
  \centering
  \includegraphics[width=\textwidth]{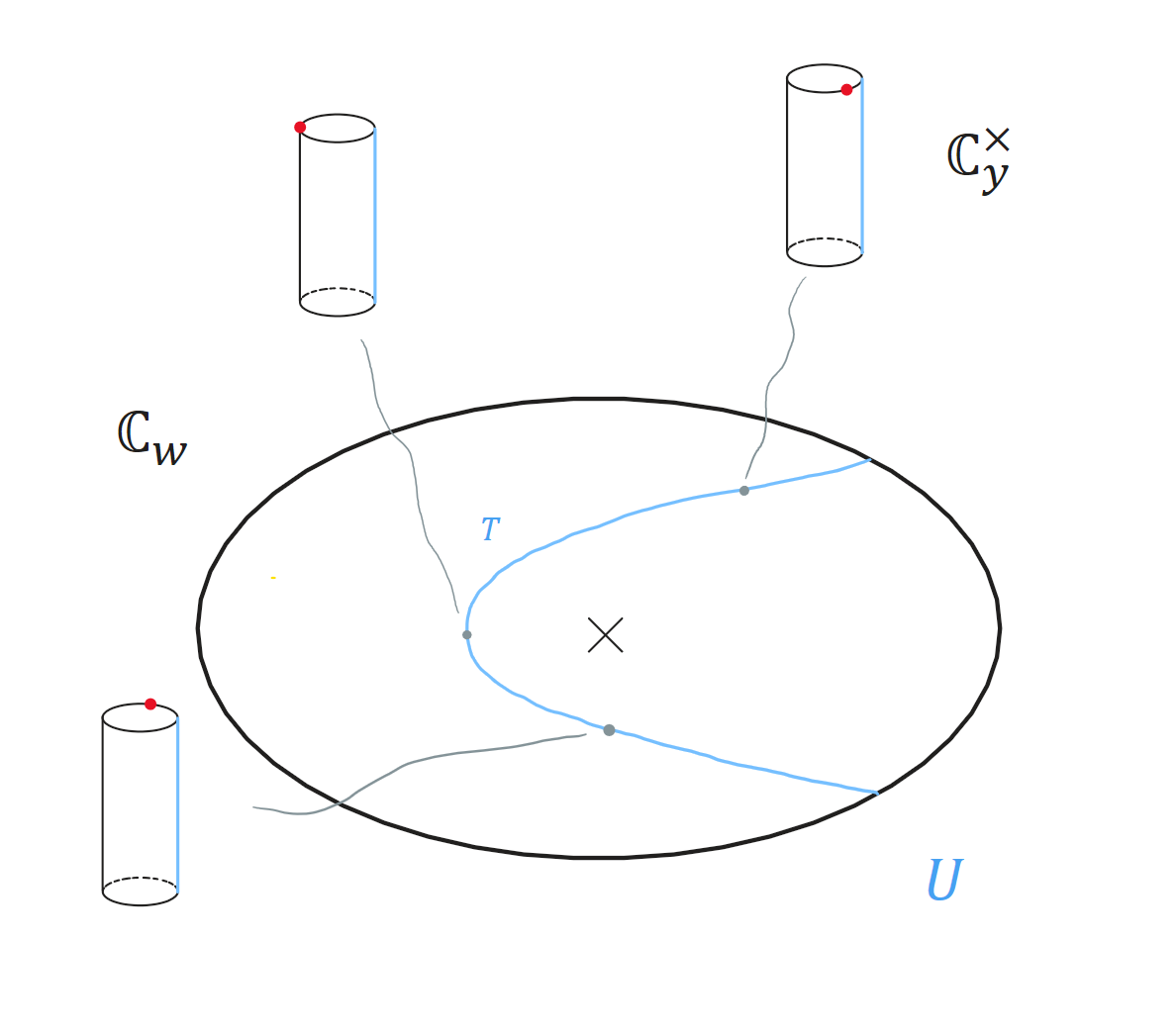}
  \caption{Mapping-cone $T$-brane $U$.}
  \label{3a}
\end{subfigure}
\hfill
\begin{subfigure}{0.49\textwidth}
  \centering
  \includegraphics[width=\textwidth]{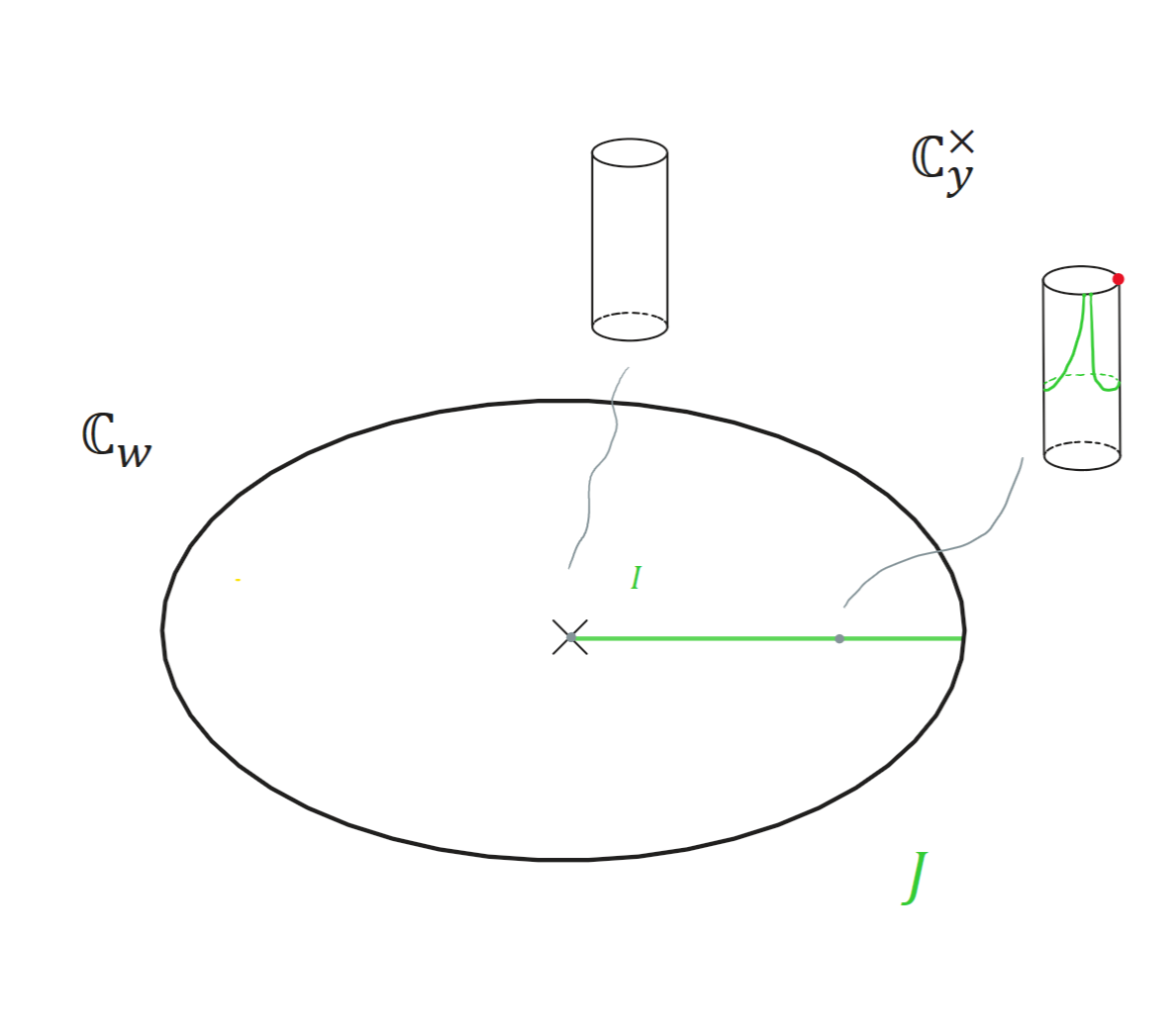}
  \caption{Step $I$-brane $J$.}
  \label{3b}
\end{subfigure}
\caption{Illustration of the standard module Lagrangians.}
\label{3}
\end{figure}

Why do these two geometrically distinct Lagrangians correspond to the same standard module? 
An answer via pseudoholomorphic-disc counting is given in Lemma~10.3 of \cite{Elise}. Namely, there exist unique intersection points 
\[
p\in\mathrm{Hom}(U,J), \qquad q\in\mathrm{Hom}(J,U),
\]
and explicit counting shows that
\[
\mu^2 (p,\, q)=\mathrm{id}\in \mathrm{Hom}(U,U), \qquad \mu^2(q,\, p)=\mathrm{id}\in \mathrm{Hom}(J,J),
\]
proving that the two objects $U$ and $J$ are isomorphic as objects of the Fukaya category
\begin{equation}
  U\cong J.
\end{equation}

While this argument is decisive, it remains intrinsically \emph{categorical}, relying on $A_\infty$-products and holomorphic-disc counts. From the viewpoint of symplectic topology, it is natural to look for a more \emph{geometric explanation}: can one construct a Hamiltonian flow on the Coulomb branch that continuously deforms one Lagrangian into the other, 
thereby providing a geometric interpolation between the two realizations of the standard module?

\begin{theorem}[Main Theorem, informal]
In the Fukaya category of the Coulomb branch associated to the quiver gauge theory $(\Gamma,\vec{d})=(\bullet,1)$, 
the Hamiltonian evolution of the mapping-cone $T$-brane $U$, under flows preserving the fiberwise stops, 
asymptotically converges to the step $I$-brane $J$, 
which serves as the generalized thimble.
\end{theorem}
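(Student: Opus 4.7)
The strategy is to construct an explicit $1$-parameter family of Hamiltonian diffeomorphisms $\{\phi_t^H\}_{t\ge 0}$ on the Coulomb branch $\mathcal Y$, each fixing the fiberwise stops pointwise and preserving admissibility, whose images $\phi_t^H(U)$ converge to $J$ in $C^\infty_{\mathrm{loc}}$ as $t\to\infty$. This upgrades the categorical isomorphism $U\cong J$ of \cite{Elise} to a genuine symplectic-geometric deformation.

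\textbf{Coordinates and objects.}
For $(\Gamma,\vec d)=(\bullet,1)$, I model $\mathcal Y$ as a $\Cx$-fibration $\pi:\mathcal Y\to\C$ with punctures $a_1,\dots,a_n$ and fiberwise Legendrian stops at heights $w_\pm$. In the coordinates $(z,w)\in\C\times\Cx$, each $T$-brane $T_i$ is the vertical line $\{\mathrm{Re}\,z=\mathrm{Re}\,a_i\}$ dressed with a $T$-shaped fiber arc between $w_\pm$; the cone $U$ is the symplectic connected sum of $T_{i+1}$ and $T_i$ at their upper intersection, giving the `U'-shape; and $J$ projects to a single vertical line through $a_i$ whose fiber profile jumps between $w_-$ and $w_+$.

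\textbf{The Hamiltonian and flow analysis.}
I will take a Hamiltonian of separated form
\[
  H(z,w)=\rho(z)\,\sigma(w),
\]
where $\rho$ is a bump function supported in a neighborhood of the segment $[a_i,a_{i+1}]$ and affine in $\mathrm{Re}\,z$ there, and $\sigma$ vanishes identically on neighborhoods of $w_\pm$. The latter condition forces $X_H\equiv 0$ along the fiberwise stops, so the stops are preserved pointwise and admissibility is maintained throughout the isotopy. A direct characteristic computation then shows that on the lower arc of $U$ the flow transports points along $X_H$ into the step profile of $J$, while the two asymptotic arms are progressively contracted toward a single vertical asymptote above $a_i$. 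Explicit quadrature in the $(\bullet,1)$ case reduces convergence to a standard long-time estimate for a smooth vector field with compact support in the base.

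\textbf{Main obstacle.}
The central difficulty is that any Hamiltonian diffeomorphism is volume-preserving, whereas $U$ and $J$ are topologically distinct: the lower arc of the `U' must `escape' somewhere. The resolution is to work in $C^\infty_{\mathrm{loc}}$ and to allow the excess to be swept along the cylindrical end of $\mathcal{Y}$, leaving every compact region in finite time; this is precisely the asymptotic sense in which $J$ realizes a \emph{generalized thimble}. A secondary hurdle is ensuring that the partially wrapped Fukaya category sees this limit, i.e.\ that intersections with any test brane $L$ stabilize in finite time so that no Floer-theoretic data is lost; this will follow from uniform admissibility along the flow together with the stop-preservation property built into the choice of $\sigma$.
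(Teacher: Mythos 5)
There is a genuine gap, and it lies exactly at the point your plan treats as routine. You model the fiberwise stops as fixed, base-independent locations $w_\pm$ and enforce stop-preservation by choosing $\sigma$ so that $X_H\equiv 0$ near them. In Aganagic's $(\bullet,1)$ model the stop in the fiber over a base point $w$ is where $\mathcal{W}=wy\to+\infty$, i.e.\ at the $|y|\to\infty$ end of the cylinder at angle $\arg y=-\arg w$: it \emph{rotates} as the base point moves around the puncture and \emph{disappears} over $w=0$. The paper's flow preserves the stops not by freezing a neighborhood of them but by rotating the fibers in synchrony with the base angle (the condition $\mathrm{d}\Theta/\mathrm{d}t+\mathrm{d}\theta/\mathrm{d}t=0$ at the top of the fiber, which becomes an ODE for the radial profile $f(R)$ and yields the $\operatorname{erfi}$-type Hamiltonian, interpolated by $a(r),b(r)$ down to pure translation deep in the fiber, and vanishing at $w=0$ where the stop is absent). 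This synchronized rotation is precisely the mechanism that fans an arbitrarily small semicircle of $U$ around its intersection with the negative real axis out into the entire $I$-fiber of $J$, while all the rest of $U$ escapes to $w\to+\infty$. With your separated ansatz $H=\rho(z)\,\sigma(w)$ and $\sigma$ vanishing near the stops, the portions of $U$ whose fiber profile runs into the stop region are frozen for all time, so $\phi_t^H(U)$ keeps its $T$-shaped profile over the original base segment and cannot converge in $C^\infty_{\mathrm{loc}}$ to $J$; moreover nothing in the ansatz generates the fiber rotation needed to create the step profile, so the asserted ``direct characteristic computation'' is exactly the nontrivial content and would not go through.

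Two smaller points. First, your picture of where $J$ comes from (the ``lower arc'' of $U$ transported into the step profile, with the arms contracting to a vertical asymptote) differs from what actually happens: in the paper $J$ emerges entirely from an infinitesimal neighborhood of the single point where $U$ meets the negative $u$-axis, and the correct statement of convergence is local (Gromov--Hausdorff/locally uniform) with everything else leaving every compact set. Second, the ``volume-preservation'' obstacle is a red herring---Lagrangians have measure zero---though your resolution (work locally and let the excess escape along the end) is indeed the right sense in which $J$ plays the role of a generalized thimble, and matches the paper's Seidel--Smith warm-up where $\tilde J$ is literally the unstable thimble of $H=\mathrm{Im}(w)$.
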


\paragraph{Outline.}
In Section~2 we review Aganagic's model together with a closely related setup due to Seidel and Smith. 
In Section~3 we give the explicit Hamiltonian function governing the flow. 
Section~4 offers a heuristic description of the deformation process, 
and Section~5 supports the analysis with numerical verification.

\paragraph{Acknowledgements.}
The author would like to thank Peng Zhou and Mina Aganagic for valuable discussions and insightful comments.

\section{Background from the categorification of knot invariants}

In addressing the problem of constructing a Floer-theoretic realization of categorified quantum group link invariants, two main approaches have been proposed.
The first is the symplectic Khovanov homology introduced by P. Seidel and I. Smith \cite{SS}, which provides a geometric model for Khovanov homology via Lagrangian Floer theory.
The second, proposed by M. Aganagic \cite{Mina}, offers a broader framework based on Floer theory with a superpotential.

\subsection{Setup of both models}
In both approaches, the categorification takes the form of the Fukaya category of a horizontal Hilbert scheme.
We recall the setup of the horizontal Hilbert schemes in both theories.

In the Seidel--Smith setup, the horizontal Hilbert scheme is $\mathrm{Hilb}^d_{\mathrm{hor}}(A_{n-1}\to \C)$ where $\pi\colon A_{n-1}\to\C$ is the standard Lefschetz fibration on the Milnor fiber of the $A_{n-1}$-surface singularity \cite{MS}.
More explicitly, the Milnor fiber $A_{n-1}$ is given by
\begin{equation}
  A_{n-1}=\left\{(x,y,w)\in \C^3\middle| xy=\prod_{i=1}^n{(w-a_i)}\right\}
\end{equation}
and $\pi(x,y,w)=w$.

Aganagic's theory, which is inspired by mirror symmetry, has a version for each semisimple Lie algebra and superalgebra.
Let $\Gamma$ be a quiver whose underlying graph is the Dynkin diagram of the Lie algebra, and fix a dimension vector $\vec{d}$.
The considered horizontal Hilbert scheme is the Coulomb branch $\mathcal{M} (\Gamma,\vec{d})$ associated with the corresponding quiver gauge theory $(\Gamma,\vec{d})$.
In addition, one specifies a collection $\mathbf{a}$ of distinguished points in the base, which we will refer to as \textit{punctures}, each labeled by a vertex of the Dynkin diagram. 
There is a natural regular Landau--Ginzburg superpotential function
$\mathcal{W}_{\mathbf{a}} \colon \mathcal{M} (\Gamma,\vec{d}) \longrightarrow \mathbb{C}$ associated with $\mathbf{a}$ introduced in \cite{Mina},
and our category of interest is the Fukaya--Seidel category
$\mathrm{Fuk}(\mathcal{M} (\Gamma,\vec{d}), \mathcal{W}_{\mathbf{a}})$.
\begin{remark}
  For mirror symmetry reasons, Aganagic considered the multiplicative Coulomb branch $\mathcal{M}^{\times} (\Gamma,\vec{d})$ defined in \cite{BFN}. However, in our concerned problem, it suffices to consider the additive version.
\end{remark}

\subsection{The analogue between Aganagic's $\mathfrak{sl}_2$ and Seidel--Smith models}
Aganagic's $\mathfrak{sl}_2$ case where $\Gamma=\bullet$ is relevant for the categorification of Khovanov cohomology. The corresponding horizontal Hilbert scheme is $\mathcal{M} (\bullet,d)=\mathrm{Hilb}^d_{\mathrm{hor}}(\Cx_y\times \C_w\to \C)$ where the projection map $\pi: \Cx_y\times \C_w\to \C$ is given by $\pi(y,w)=w$.
We will refer to $\C_w$ as the base and $\Cx_y$ as the fiber.

The case $d=1$ is of primary interest, because the general theory is largely determined by it \cite{Mina}. When $d=1$, the horizontal Hilbert scheme is simply a manifold $\Cx_y\times\C_w$ and the superpotential is given in (B.3) of \cite{Mina}

\begin{equation}
    \mathcal{W} _{\mathbf{a}}=\lambda _1\log w+\lambda _0\log y+y^{-1}\prod_{i=1}^n{\left( 1-\frac{a_i}{w} \right)},\label{2eq}
\end{equation}
which produces the fiberwise stops. Note that (\ref{2eq}) is defined on the multiplicative Coulomb branch where $w\ne 0$.

The $\log$ terms are introduced for the purpose of equivariant grading, which will not play a role in our discussion. Setting $\hat{y}=\frac{1}{yw^n}\in \Cx$, we rewrite (\ref{2eq}) as
\begin{equation}
  \mathcal{W} _{\mathbf{a}}=\hat{y}\prod_{i=1}^n{\left( w-a_i \right)}.\label{3eq}
\end{equation}
We henceforth write the new variable $\hat{y}$ simply as $y$.

Let us note that in the $\mathfrak{sl}_2$ case, Aganagic's Hilbert scheme is closely related to the Seidel--Smith scheme: Aganagic's differs from Seidel--Smith's only by removing the divisors at the punctures and adding a superpotential:

In the Seidel--Smith setup,
we can see that away from the punctures $\mathbf{a}$, the product on the right-hand side of $ xy=\prod_{i=1}^n{(w-a_i)}$ is nonvanishing, the fiber is biholomorphic to a cylinder $\Cx$. 
At the punctures, the equation degenerates to $xy=0$, thus the fiber collapses to an ordinary double point $\left\{(x=0)\cup (y=0)\right\}$. Hence the Seidel--Smith scheme for the $d=1$ case is locally $\Cx\times\C$ with a divisor added at each punctures $w=a_i$.

In the Aganagic setup, the scheme is globally $\Cx\times\C$, without the double points at punctures. The product in the superpotential (\ref{3eq}) is nonvanishing away from the punctures $\mathbf{a}$, producing a stop on the fiber $\Cx_y$. However, at those punctures, the product is 0, and the stop disappears.

Since later on we will focus only on the local structure near a puncture, we will simply set $a=0$ in the Seidel--Smith model and our additive Coulomb branch. Thus, near a puncture, we can locally regard Seidel--Smith's scheme as $\left\{(x,y,w)\in \C^3\middle| xy=w\right\}$, and Aganagic's scheme as $\Cx_y\times \C_w$ with superpotential $\mathcal{W}=wy$.
These spaces are illustrated in Fig.~\ref{models}.

\begin{figure}[H]
\centering
\begin{subfigure}{0.49\textwidth}
  \centering
  \includegraphics[width=\textwidth]{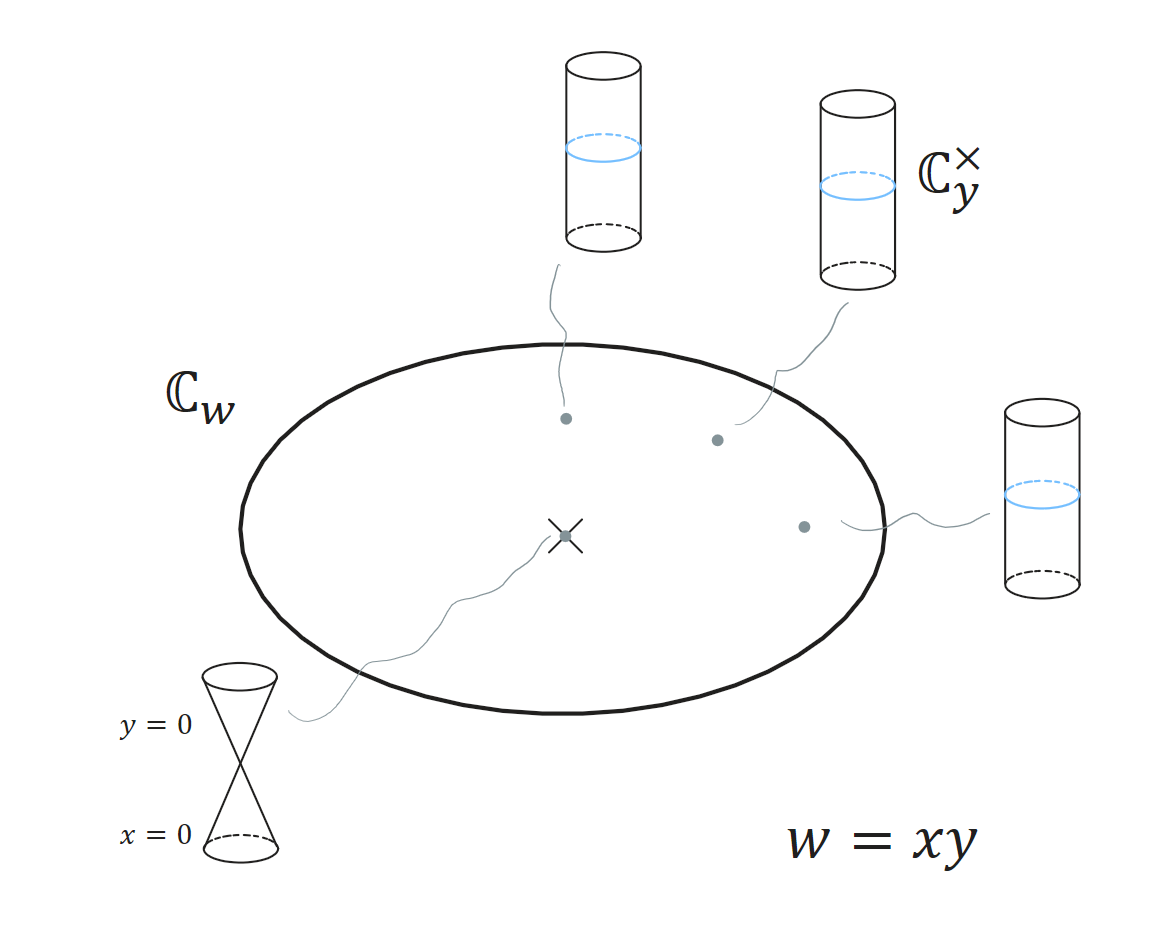}
  \caption{Seidel--Smith model. The blue circles are vanishing cycles along the straight path to the origin.}
  \label{1a}
\end{subfigure}
\hfill
\begin{subfigure}{0.49\textwidth}
  \centering
  \includegraphics[width=\textwidth]{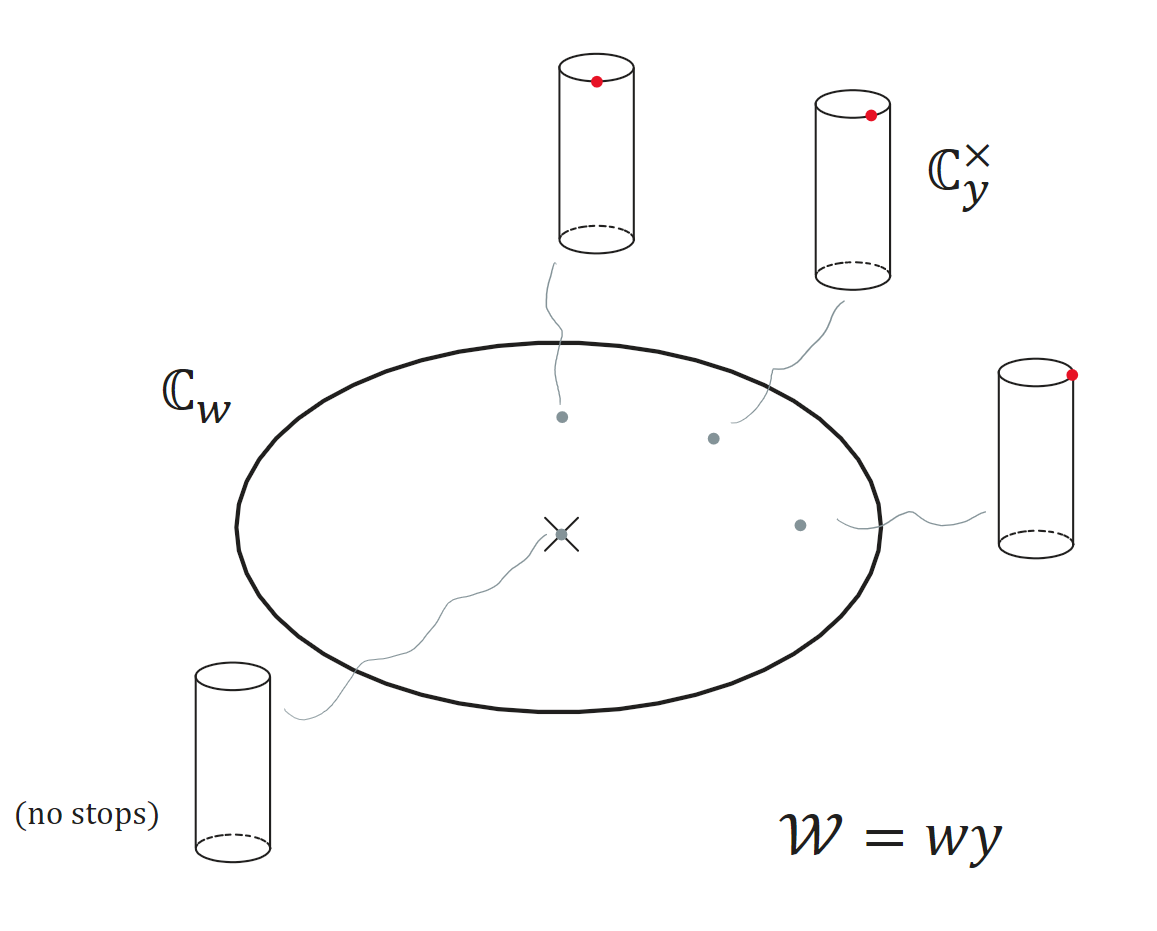}
  \caption{Aganagic $\mathfrak{sl}_2$ model. The red dots are stops determined by superpotential.}
\end{subfigure}
\caption{Illustration of the schemes. $\Cx_y$ is depicted as a cylinder $\R_{\left|y\right|}\times S^1_{\mathrm{Arg}(y)}$ with the top $y\to \infty$ and the bottom $y\to 0$.}\label{models}
\end{figure}

\begin{remark}
  The vanishing cycle over $w\ne 0$ along the straight path to $(x,y,w)=(0,0,0)$ at $w=\epsilon$ is the circle $(x,y)=(\sqrt{\epsilon}e^{it},\sqrt{\epsilon}e^{-it})$ \cite{Thomas}, as shown in blue in Fig.~\ref{1a}.
\end{remark}

\subsection{Geometric proof in the Seidel--Smith analogue}

We can easily see that these two Lagrangian submanifolds $U$ and $J$ are geometrically related in their Seidel--Smith counterpart, where the analogue for the $T$-fiber in the Seidel--Smith setting is still the positive real line $\R_+ \subset \Cx$, while the $I$-fiber corresponds to a circle $U(1) \subset \Cx$. We will name the Lagrangians analogous to $U$ and $J$ in the Seidel--Smith setting respectively as $\tilde{U}$ and $\tilde{J}$, as shown in Fig.~\ref{13}.

\begin{proposition}
Under the Hamiltonian flow $\varphi_t$ generated by $H=\mathrm{Im}(w)$, the Lagrangian $\tilde U$ converges, as $t\to+\infty$, to the Lagrangian $\tilde J$.
\end{proposition}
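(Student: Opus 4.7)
The plan is to exploit the explicit integrability of the flow. First I would compute the Hamiltonian vector field of $H=\mathrm{Im}(w)=\mathrm{Im}(xy)$ on the Seidel--Smith space $\{xy=w\}\subset\C^3$ with the restricted flat symplectic form. Solving $\iota_{X_H}\omega=dH$ directly in the affine chart $(x,y)$ gives
\[
X_H \;=\; \frac{\bar y\,\partial_x + \bar x\,\partial_y}{1+|x|^2+|y|^2}\;+\;\text{c.c.}
\]
From this, three structural facts follow. The function $\mathrm{Im}(w)$ is conserved (automatic). The real quadratic $c:=|x|^2-|y|^2$ is a second conserved quantity, as $\bar x\dot x-\bar y\dot y=0$ is immediate from the formula. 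And $\dot{\mathrm{Re}(w)}=(|x|^2+|y|^2)/(1+|x|^2+|y|^2)$ is strictly positive except at the unique fixed point $(0,0,0)$, where it vanishes. The two conserved quantities $(h,c):=(\mathrm{Im}(w),|x|^2-|y|^2)$ foliate the $4$-manifold into invariant $2$-surfaces on which the flow reduces to a single first-order ODE whose asymptotic rate is $1$ far from the puncture and $0$ at the puncture.

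Next I would parametrize both $\tilde U$ and $\tilde J$ using $(h,c)$ and an auxiliary fiber parameter, and then track $\varphi_t(\tilde U)$. The key geometric mechanism is the differential slowdown near the puncture: level slices of $\tilde U$ at $\mathrm{Im}(w)=h$ far from zero get translated in the $+\mathrm{Re}(w)$ direction at nearly unit speed, while slices passing close to the puncture move very slowly. Hence the U--shaped base projection of $\tilde U$ is stretched horizontally, and in the limit $t\to\infty$ its arms extend to $\mathrm{Re}(w)=+\infty$ while its bottom remains near $w=0$ --- precisely the step base path of $\tilde J$. For the fiber direction, writing $x=r_x e^{i\phi}$ and $y=r_y e^{i\psi}$ yields
\[
\dot\phi-\dot\psi \;=\; \frac{c\,h}{|w|^2\bigl(1+\sqrt{c^2+4|w|^2}\bigr)},
\]
which decays like $t^{-3}$ along any orbit since $|w|\sim t$ asymptotically. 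The integrated rotation $\int_0^\infty(\dot\phi-\dot\psi)\,dt$ is therefore finite, and combined with $\phi+\psi=\arg w\to 0$ it produces exactly the phase shift that carries the $\R_+$ ray in $\Cx_y$ into the $U(1)$ circle fiber of $\tilde J$ over the corresponding limiting base point.

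The main obstacle I expect is formulating the convergence correctly. Since individual points on $\tilde U$ escape to $\mathrm{Re}(w)=+\infty$ with $|x|,|y|\to\infty$, the statement $\varphi_t(\tilde U)\to\tilde J$ cannot be read as a pointwise limit; it must be understood as a convergence of submanifolds, e.g.\ Hausdorff--on--compacts after a partial compactification of the base in the $+\mathrm{Re}(w)$ direction, or as asymptotic identification of wrapped ends. Reconciling the horizontal escape of individual orbits with the finite asymptotic fiber rotation --- so that the limit \emph{set} is literally $\tilde J$ and not merely Lagrangian--isotopic to it --- is the technical heart of the argument; the remainder is bookkeeping in the integrable coordinates $(h,c)$.
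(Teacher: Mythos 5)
Your two structural observations (conservation of $\mathrm{Im}(w)$ and of $c=|x|^2-|y|^2$, the latter being the moment map of the $S^1$-action $(x,y)\mapsto(e^{i\phi}x,e^{-i\phi}y)$ preserving both $\omega$ and $H$) are correct and potentially useful, but your explicit vector field is not: on the graph $w=xy$ the restricted flat form is
\[
\omega=\tfrac{i}{2}\Bigl((1+|y|^2)\,dx\wedge d\bar x+(1+|x|^2)\,dy\wedge d\bar y+y\bar x\,dx\wedge d\bar y+x\bar y\,dy\wedge d\bar x\Bigr),
\]
which is not conformal to the standard form, so $X_H$ is not $(\bar y\,\partial_x+\bar x\,\partial_y)/(1+|x|^2+|y|^2)+\mathrm{c.c.}$; the correct field carries an extra term proportional to $\bar x^2+\bar y^2$, and your formula for $\dot\phi-\dot\psi$ inherits this error. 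That is repairable, but the real problem lies in the mechanism you propose for the fiber direction, which is exactly the step you yourself flag as the ``technical heart'' and then leave open. Since $c$ is conserved, no individual point of $\tilde U$ with $c\neq 0$ can ever reach $\tilde J$, whose fibers are the vanishing cycles $\{|x|=|y|\}$; and every point with $(h,c)\neq(0,0)$ eventually leaves any fixed compact set. Hence the limit of $\varphi_t(\tilde U)$ over a compact region is not produced by a ``finite integrated rotation'' of escaping orbits (your $|w|\sim t$, rotation $\sim t^{-3}$ asymptotics concern points that exit every compact set and are irrelevant to the convergence statement, which the paper formulates as locally uniform Gromov--Hausdorff convergence on compacts, so no compactification of the base at $+\infty$ is needed). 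The full circle fiber of $\tilde J$ over a fixed base point is swept out, at successive times, by an ever-shrinking neighborhood of the single point $p=\tilde U\cap(\text{stable manifold})$, stretched along the unstable directions by the hyperbolic passage near the critical point.

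This is precisely how the paper closes the argument, and it is the idea missing from your sketch: by the K\"ahler identity the flow of $H=\mathrm{Im}(w)$ is the gradient flow of $\mathrm{Re}(w)$; $\tilde J$ is identified with the unstable Lefschetz thimble of the critical point at the origin; one checks that $\tilde U$ meets the stable thimble transversally in the single point $p$; and the stable/unstable-manifold ($\lambda$-lemma) picture then says that a neighborhood of $p$ in $\tilde U$ accumulates onto the whole unstable manifold while every other point escapes, giving convergence to $\tilde J$. Your integrable $(h,c)$-reduction could in principle substitute for the $\lambda$-lemma if you carried out a quantitative analysis of the passage of orbits with small $(h,c)$ near the critical point and showed that all phases on the vanishing circle are attained in the limit, but as written the proposal does not establish that the limit set is all of $\tilde J$ rather than just its base ray. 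A smaller symptom of the same confusion: only $p$ tends to the origin; the rest of the bottom fiber of $\tilde U$ over $w=-u^*$ passes the puncture and escapes, so ``its bottom remains near $w=0$'' is not accurate.
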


\begin{proof}
On a Kähler manifold, the Hamiltonian vector field of $H=\mathrm{Im}(w)$ coincides with the gradient of $\mathrm{Re}(w)$.
Consequently, $\mathrm{Im}(w)$ is conserved along the flow, while $\mathrm{Re}(w)$ increases monotonically.
Thus the flow on the base runs parallel to the real axis, escaping towards $+\infty$.
In the fibers, one has vanishing cycles, whose transport along vanishing paths gives rise to the associated Lefschetz thimbles.
From this point of view, the Lagrangian $\tilde J$ can be identified with the unstable Lefschetz thimble of the critical point $(y,z,w)=(0,0,0)$.

On the other hand, the Lagrangian $\tilde U$ intersects the stable manifold only at a single point $p$ where $(x,y,w)=(\sqrt{|w^*|},\sqrt{|w^*|},w^*)$.
Under the flow, every point except a neighborhood of $p$ escapes to $+\infty$, while the neighborhood of $p$ is carried into the unstable manifold.  
This unstable manifold is precisely the Lagrangian $\tilde J$.  
Thus, under the long-time Hamiltonian dynamics of $H$, $\tilde U$ converges to $\tilde J$.
\end{proof}

\begin{figure}[H]
  \centering
  \includegraphics[width=.6\textwidth]{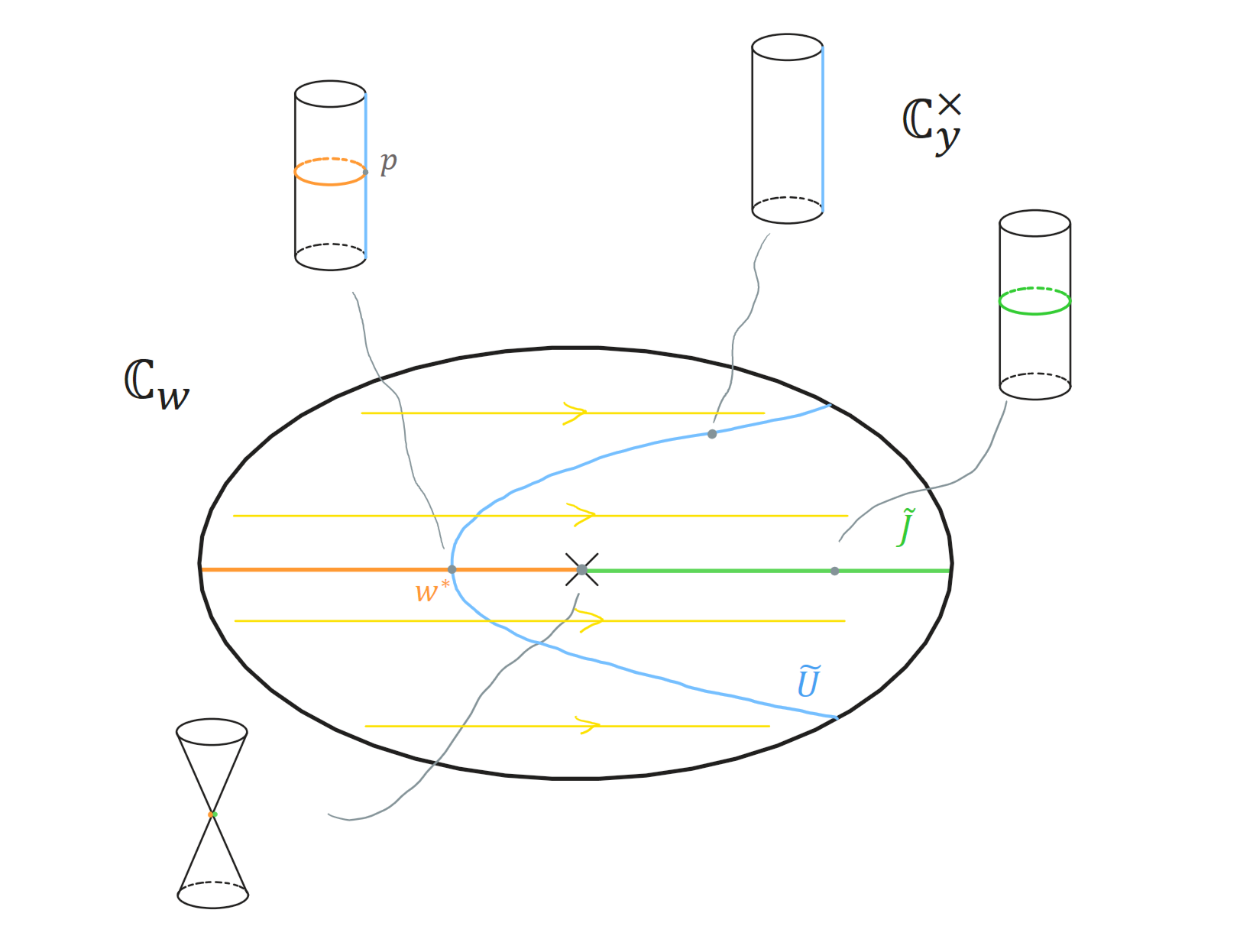}
  \caption{The Lagrangian $\tilde{U}$ (blue), $\tilde{J}$ (green, coinciding with the unstable manifold), and the stable manifold (orange).}
  \label{13}
\end{figure}

\begin{remark}
  More precisely, this kind of `Hamiltonian isotopy in the $t\to \infty$ limit' should be described as $\tilde U$ converges locally uniformly in the Gromov--Hausdorff sense to $\tilde J$, which means that for any compact set $K$, $\lim_{t\to \infty}\mathrm{dist}(\varphi_t(\tilde U \cap K),\tilde J\cap K)=0$.
\end{remark}

\section{Main result: Geometric proof in Aganagic's model}

In Aganagic's model, we can no longer apply the thimble argument, which renders the geometric equivalence nontrivial.

The main result of this paper establishes the existence of a Hamiltonian flow under which the Lagrangian $J$ arises as the $t\to \infty$ limit of the Lagrangian $U$.

\begin{theorem}
In Aganagic's $\mathfrak{sl}_2$, $d=1$ model, consider a neighborhood of a puncture with the space $\C_w \times \Cx_y$ and superpotential $\mathcal{W}=wy$. Equip this space with the symplectic form
\begin{equation}
  \omega = \frac{i}{2}\left( \mathrm{d}w\wedge \mathrm{d}\bar{w} + |y|^{-3}\,\mathrm{d}y\wedge \mathrm{d}\bar{y} \right). \label{omega}
\end{equation}
Then the Hamiltonian
\begin{equation}
  H(w,y) = \Bigg( \frac{1}{2}e^{-2/|y|}\left( 1-\frac{e^{-|w|^2}}{|w|}\int_0^{|w|} e^{\alpha^2}\,\mathrm{d}\alpha \right) 
  + 1 - e^{-1/|y|} \Bigg)\,\mathrm{Im}(w)
\end{equation}
generates a flow such that the Lagrangian $U$ converges to the Lagrangian $J$ as $t\to\infty$.
\end{theorem}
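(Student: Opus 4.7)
The plan is to analyse the flow by splitting the vector field according to the product structure of $\omega$, exploiting the $S^{1}$-symmetry of $H$ in $\arg y$, and tracking the asymptotics of the base and fiber components separately. Writing $w = u + iv$, $y = r e^{i\theta}$, and $H = F(r, |w|)\,v$ for the bracketed prefactor $F$, the symplectic form reads $\omega = du \wedge dv + r^{-2}\,dr \wedge d\theta$, and the Hamiltonian equations $\iota_{X_H}\omega = dH$ give
\begin{align*}
\dot u &= F + \frac{F_{|w|}\,v^2}{|w|}, & \dot v &= -\frac{F_{|w|}\,u\,v}{|w|}, \\
\dot r &= 0, & \dot\theta &= -r^{2}\,F_r(r,|w|)\,v.
\end{align*}
The vanishing of $\dot r$ is the key structural feature: each horizontal cylinder $\{|y| = r\}$ is preserved by the flow.

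Next I would establish the sign structure and asymptotics. With $\Phi(x) := \frac{e^{-x^{2}}}{x}\int_{0}^{x} e^{\alpha^{2}}\,d\alpha$ so that $F(r,|w|) = \tfrac{1}{2}\,e^{-2/r}\bigl(1-\Phi(|w|)\bigr) + 1 - e^{-1/r}$, a direct computation shows that $\Phi$ is strictly decreasing from $1$ to $0$ on $(0,+\infty)$, yielding the sign conditions $F > 0$, $F_{|w|} > 0$, and $F_{r} < 0$ throughout the domain. Consequently $\dot u > 0$ (so $u$ grows monotonically), and $\dot v/v < 0$ for $u > 0$ (so $|v|$ decays exponentially); along every orbit one has $u(t) \to +\infty$ and $v(t) \to 0$. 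Since $|y|$ is conserved and $\dot\theta$ has the same sign as $v$, the total angular displacement along an orbit starting at $w_0$,
\[
\Delta\theta(r,w_0) \;=\; -\int_{0}^{+\infty} r^{2}\,F_r\bigl(r,|w(t;w_0)|\bigr)\,v(t;w_0)\,dt,
\]
converges to a finite value whose sign is determined by $\mathrm{Im}(w_0)$.

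Finally I would match the limit with $J$. Parametrize $U$ by $(s,r) \mapsto (w(s),r)$ with $s$ traversing the U-shaped base curve (which encircles the puncture) and $r \in \R_+$ running along the $T$-fiber; the flow sends each such point to $\bigl(\varphi^{w}_t(w(s)),\,r\,e^{i\theta(t;s,r)}\bigr)$. Understood in the Gromov--Hausdorff sense on compact subsets discussed for the Seidel--Smith analogue, the assertion $\varphi_t(U)\to J$ reduces to two statements: first, the base image asymptotically collapses onto the outgoing ray from the puncture which forms the $w$-projection of $J$; second, as $s$ sweeps along the full U-curve, the accumulated rotation $s \mapsto \Delta\theta(r,w_0(s))$ traces out precisely the fiber-angular profile of $J$ at each radius $r$. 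The principal obstacle is this quantitative matching: the specific error-function form of $F$ together with the $|y|^{-3}$ factor in the Liouville geometry are engineered so that the base collapse and the fiber unwrapping synchronise exactly to recover $J$, and rigorously verifying this synchronisation in closed form is intricate---which is precisely why Section~5 corroborates the analysis numerically.
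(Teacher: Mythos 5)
Your setup (the flow equations in the coordinates $w=u+iv$, $y=re^{i\theta}$ with $\omega=du\wedge dv+r^{-2}dr\wedge d\theta$, the conservation $\dot r=0$, and the signs $F>0$, $F_{|w|}>0$, $F_r<0$) is correct and matches the paper's explicit computation of $X_H$; and since the paper itself completes the identification of the limit heuristically plus numerically, deferring the final matching to numerics is not by itself a deviation. However, your quantitative claims about the asymptotics are wrong, and they contradict a conservation law already contained in your own formulation. Since $H=F(r,|w|)\,v$ is constant along orbits and $r=|y|$ is constant, while $F(r,|w|)\to F_\infty(r)=\tfrac12 e^{-2/r}+1-e^{-1/r}>0$ as $|w|\to\infty$, every orbit with $H\neq 0$ has $v(t)\to H/F_\infty(r)\neq 0$: there is no exponential decay and no collapse of the base image onto the real axis (indeed $\dot v/v=-F_{|w|}u/|w|=O(|w|^{-3})=O(t^{-3})$ is integrable). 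Consequently your angular integral $\Delta\theta$ does not converge either: $\dot\theta=-r^2F_r v\to\bigl(e^{-1/r}-e^{-2/r}\bigr)v_\infty\neq 0$, so the fiber angle of an escaping point grows linearly in $t$. The proposed mechanism --- "base collapses onto the outgoing ray and the finite accumulated rotations trace the fiber profile of $J$" --- therefore cannot be made to work as stated.

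The convergence asserted in the theorem is local Gromov--Hausdorff convergence on compact sets (as made precise in the Seidel--Smith section), so the escaping orbits above are simply irrelevant: they leave every compact set. What produces $J$ is the degenerate locus of the flow, namely $\{w=0,\ |y|=\infty\}$, where $X_H\to(1-e^{-1/|y|})\,\partial_u=0$; all of $U$ except a shrinking neighborhood (the "semicircle") of its slow point near $(u,v)=(-u^*,0)$ at the top of the fiber exits every compact set, and it is the stretching of that neighborhood, with points at different heights and different signs of $v$ acquiring different asymptotic $\theta$-shifts, that sweeps out the full $I$-fiber circle of $J$ over the positive real axis. This is the mechanism of Section~4.3 and Figs.~6--7, and it is absent from your argument. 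Relatedly, you never verify the property that actually characterizes this $H$: at $|y|=\infty$ the fiber rotation exactly cancels the base rotation, $\dot\Theta+\dot\theta=0$, so the flow preserves the fiberwise stops --- the admissibility condition stated in the informal main theorem and the constraint from which the error-function profile of $F$ was derived in the first place.
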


Details of the construction and properties of this Hamiltonian function will be provided in the following sections.

\newpage
\section{Heuristic description of the flow}
\subsection{Structure and coordinates of the manifold}

\begin{figure}[h!]
  \centering
  \includegraphics[width=.8\textwidth]{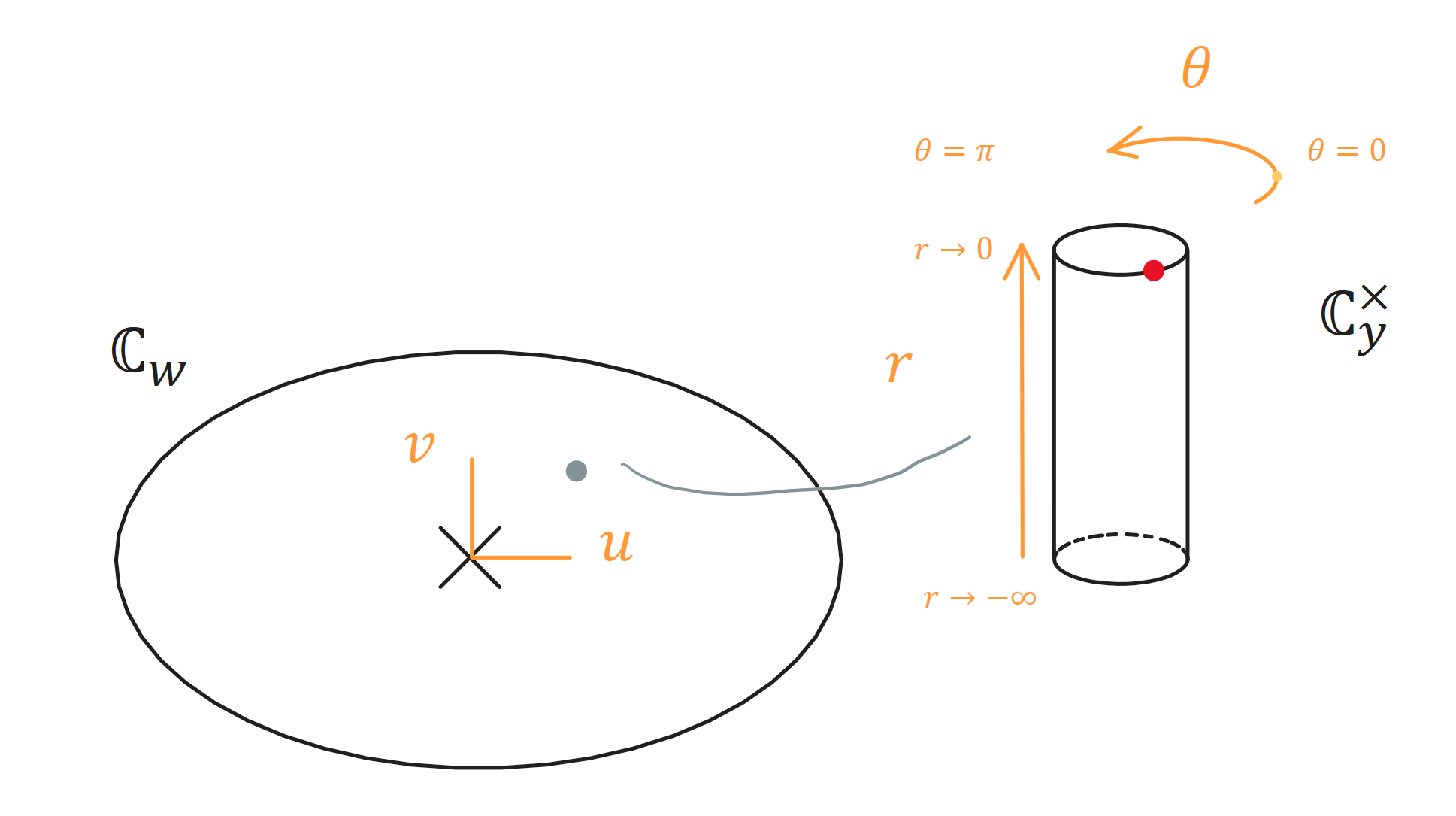}
  \caption{Real coordinates for Aganagic's scheme.}
  \label{1}
\end{figure}

Following Fig.~\ref{1}, we introduce real coordinates $(u,v,r,\theta)\in \R\times\R\times\left(-\infty,0\right)\times S^1$
\begin{gather}
  w=u+iv,\quad y=-\frac{e^{i\theta}}{r}
\end{gather}
where $\theta\sim\theta+2\pi$ is the cyclic coordinate.
We will also use polar coordinates for the base,
\begin{equation}
  w=Re^{i\Theta}
\end{equation}when convenient.

In these coordinates, the above symplectic structure (\ref{omega}) becomes
\begin{equation}
  \omega=\mathrm{d}u\wedge\mathrm{d}v+\mathrm{d}r\wedge\mathrm{d}\theta=R\,\mathrm{d}R\wedge\mathrm{d}\Theta+\mathrm{d}r\wedge\mathrm{d}\theta
\end{equation}

The stops are located where $\mathcal{W}=wy\to +\infty$, corresponding to the limit $r\to 0$ and $\theta=-\Theta$. The Hamiltonian is required to preserve these stops.

\subsection{Expected behavior of the flow}

Intuitively, the flow should move in the positive $u$-direction and be symmetric with respect to the $u$-axis.  
For simplicity, we assume that the Hamiltonian $H$ is independent of $\theta$, which implies that $r$ is conserved along flow lines.  

At $r\to 0$, since there is no stop at the fiber over $w=0$ and the flow must preserve the stops, the Hamiltonian vector field must vanish at $w=0$.  
For points on the negative real axis ($w\in \R_-$), it takes infinite time to reach $w=0$.
For points with nonzero imaginary part ($v=\mathrm{Im}(w)\ne 0$), trajectories can cross the $v$-axis and eventually escape to $w=+\infty$.  

\begin{wrapfigure}{r}{0.45\textwidth}
  \centering
  \includegraphics[width=0.4\textwidth]{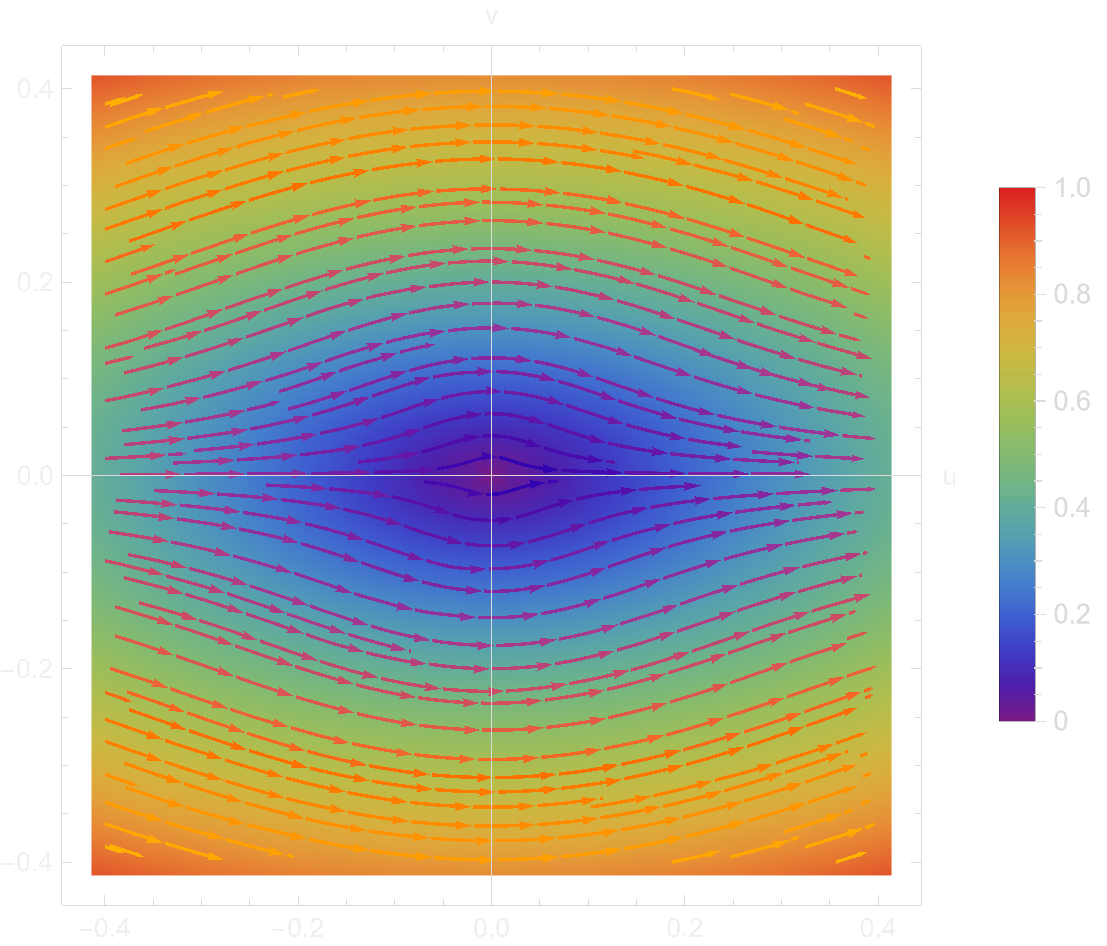}
  \caption{Flow of $H=Rv$. Colors represent speed.}
  \label{4}
\end{wrapfigure}

\begin{example}
  A Hamiltonian of the form $H=Rv=\sqrt{u^2+v^2}\,v$ exhibits this behavior (see Fig.~\ref{4}).  
  More generally, functions of the form $H=f(R)v$ with suitable $f(R)$ produce a similar effect.
\end{example}

For the fibers, because the flow must preserve the stops, which rotate in opposite directions across different fibers, there should be corresponding rotations within the fibers, which will be discussed later.  

At $r<0$ and $v=0$, the stops are no longer relevant.  
Hence points in the fibers over the negative $u$-axis should flow past the origin and onto the positive $u$-axis.  
By symmetry, there should be no fiber rotations in this region.  

As $r\to -\infty$, from a global perspective, every point lies close to the $u$-axis rather than near the tops of the cylindrical fibers.  
Thus, the flow should behave as along the $u$-axis for $r<0$: a simple rightward translation, without fiber motion.  

\begin{example}
  A Hamiltonian of the form $H=\mathrm{Im}(w)=v$ produces exactly this behavior.
\end{example}

\begin{figure}[H]
\centering
\begin{subfigure}{0.45\textwidth}
  \centering
  \includegraphics[width=\textwidth]{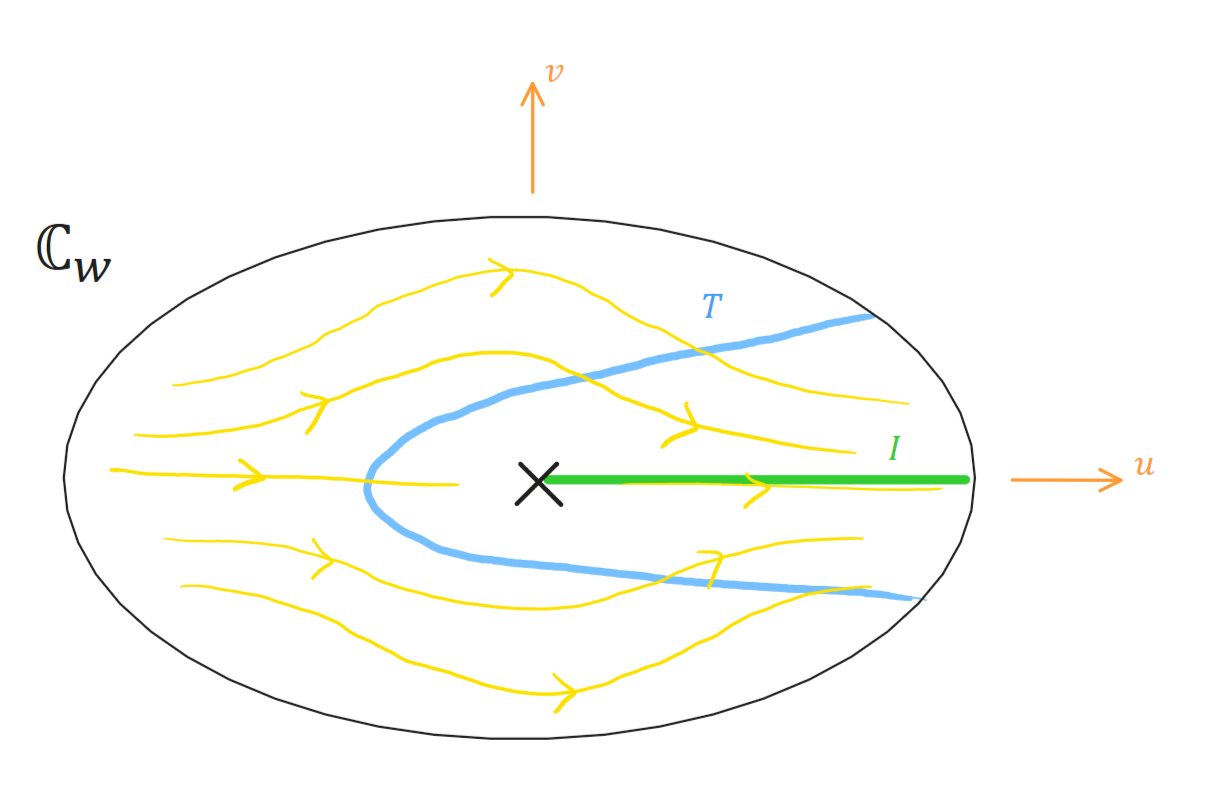}
  \caption{$r\to 0$.}
\end{subfigure}
\hfill
\begin{subfigure}{0.45\textwidth}
  \centering
  \includegraphics[width=\textwidth]{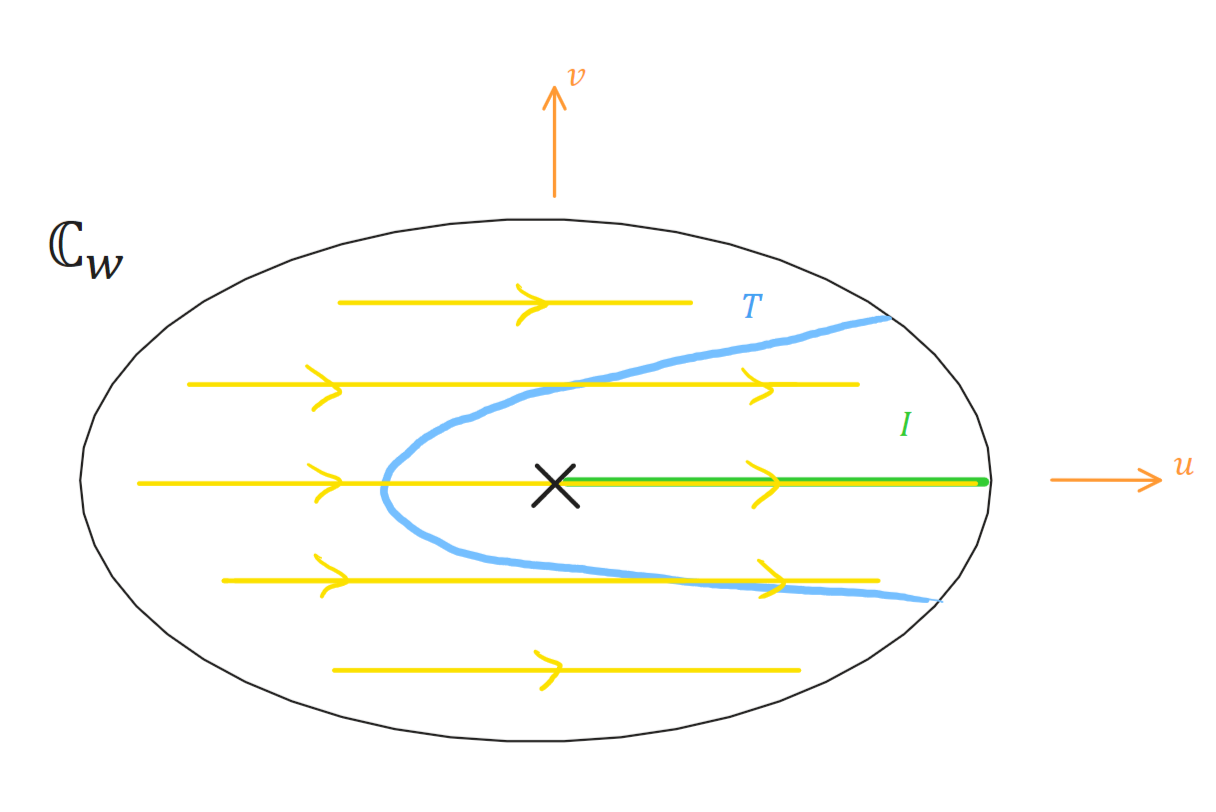}
  \caption{$r\to -\infty$.}
\end{subfigure}
  \caption{Expected behavior of the flow in the base.}
  \label{52}
\end{figure}
~\\
~\\
\subsection{Emergence of the $I$-fibers from the $T$-fibers}
We will see that any flow with the above properties necessarily carries the Lagrangian $U$ to the Lagrangian $J$ in the limit.

The entire Lagrangian $J$ is expected to originate from a small neighborhood around $(u,v,r,\theta)=(-u^*,0,0,0)$ on $U$, where $-u^*$ is the real part of the intersection point of $U$ with the $u$-axis. Specifically, points on the $J$ within a given fiber are expected to emerge from a small semicircle in $U$ centered at $(u,v,r,\theta)=(-u^*,0,0,0)$.

\begin{figure}[H]
  \centering
  \includegraphics[width=\textwidth]{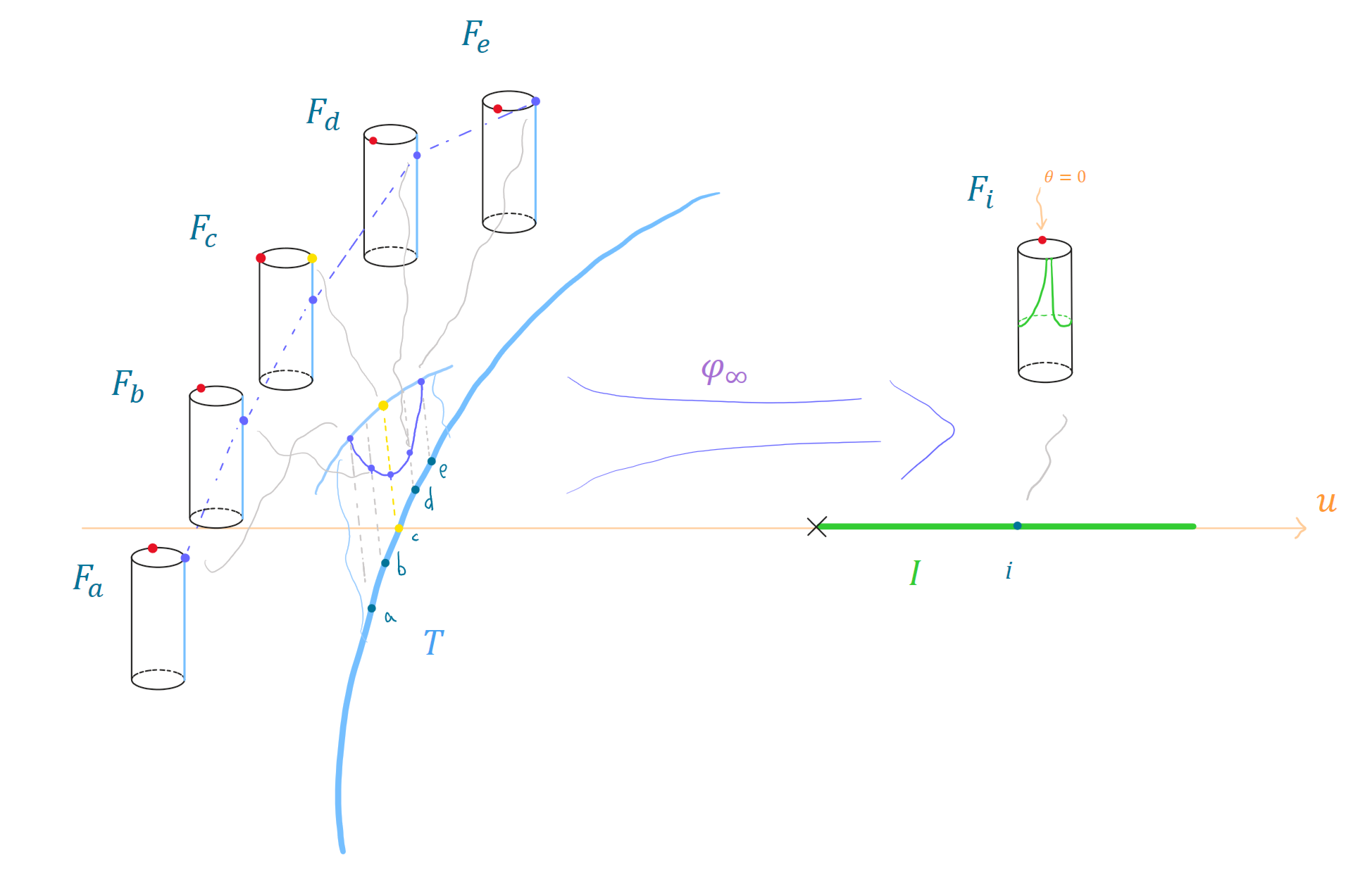}
  \caption{The preimage of an I-fiber under the flow of $H$. The fiber $F_i$ is shown tilted for visualization purposes.}
  \label{6}
\end{figure}

For points near the top of the $\C^*_y$ fiber ($r=0$, i.e., the part of the semicircle far from the $-u$-axis), the fiber points are expected to rotate along with the stops.

For semicircle points closer to the $-u$-axis, the rotation decreases as they move across fibers. In particular, points exactly on the $-u$ axis simply translate rightward without any rotation or deviation.

Along the flow, semicircle points with $v\ne 0$ acquire a nonzero $\theta$ component as their $v$ coordinate approaches zero. In the limit $t\to\infty$, the semicircle collapses onto a single fiber on the positive $u$-axis, which constitutes an I-fiber. For a detailed illustration, see Fig.~\ref{7}.

\begin{figure}[H]
  \centering
  \includegraphics[width=\textwidth]{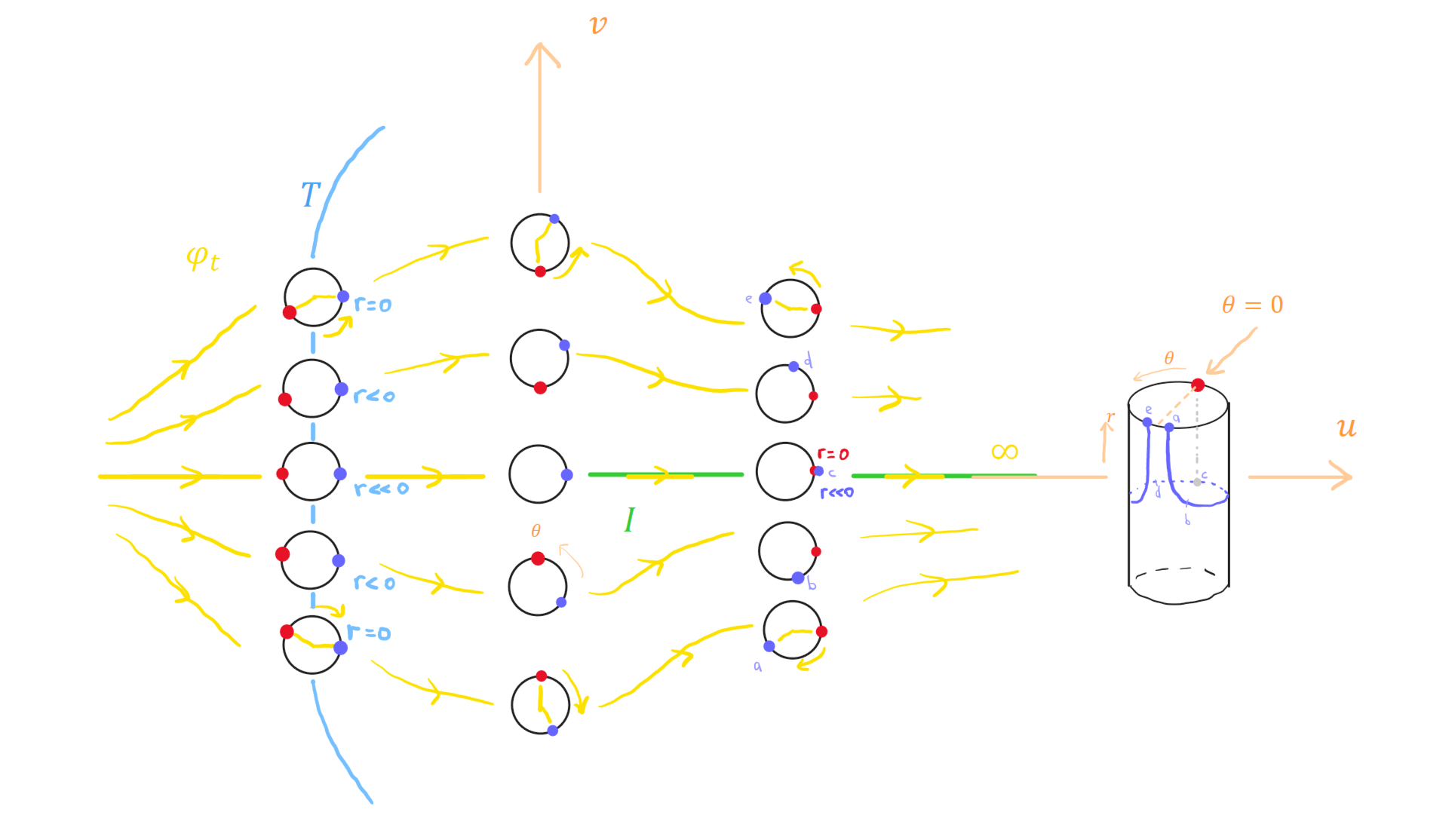}
  \caption{The mechanism of the Hamiltonian flow of $H$. The circles provide a top-down view of the fibers. The red dots indicate stops located at the top ($r=0$) of the cylindrical fiber. The purple dots correspond to points in the semicircle under the Hamiltonian flow, lying at different heights ($r$); these heights remain fixed along the flow.}
  \label{7}
\end{figure}

\begin{remark}
  It should be noted from Fig.~\ref{7} that the notation $r\ll 0$ for the purple dot on the fiber with $v=0$ is only meant to indicate that its $r$-value is smaller than those of the neighboring $r<0$ points on the semicircle. In the limit $t\to\infty$, this point approaches $r=0$, as do all the purple dots. Nevertheless, the point in the $v=0$ fiber remains slightly lower than the other points in the semicircle, by a higher-order infinitesimal. This distinction is important because of the stop on the fibers along the positive $u$-axis.
\end{remark}

\section{Construction of the Hamiltonian function}
The question remains whether we can really write out a Hamiltonian function to achieve such flow behavior.

We propose the ansatz:
\begin{equation}
  H\left( R,\Theta ,r \right) =\left( a\left( r \right) f\left( R \right) R+b\left( r \right) R \right)  \sin \Theta, \label{a}
\end{equation}
where the functions $a(r)$ and $b(r)$ satisfy the boundary conditions:
\begin{gather}
  a(0) =1, \quad b(0) =0,  \\
  a(-\infty) =0, \quad b(-\infty) =1.
\end{gather}

The Hamiltonian vector field produced by a general Hamiltonian $H(R,\Theta,r,\theta)$ would be
\begin{equation}
X_H=\frac{1}{R}\frac{\partial H}{\partial \Theta}\,\partial_R
-\frac{1}{R}\frac{\partial H}{\partial R}\,\partial_\Theta
+\frac{\partial H}{\partial \theta}\,\partial_r
-\frac{\partial H}{\partial r}\,\partial_\theta.
\end{equation}

At $r=0$, the fibers must rotate with angular velocity equal in magnitude and opposite in sign to that of the base. This requirement translates into the condition
\begin{equation}
  \frac{\mathrm{d}\Theta}{\mathrm{d}t}+\frac{\mathrm{d}\theta}{\mathrm{d}t}=0 \quad \text{at } r=0,\; R\ne 0,
\end{equation}
which is
\begin{equation}
  \frac{1}{R} \frac{\partial H}{\partial R} + \frac{\partial H}{\partial r} = 0 \quad \text{at } r=0,\; R\ne 0.
\end{equation}
Substituting the ansatz (\ref{a}) yields the differential equation
\begin{gather}
  \left( \frac{1}{R}f\left( R \right) +f'\left( R \right) +\left( a'\left( 0 \right) f\left( R \right) +b'\left( 0 \right) \right) R \right) \sin \Theta =0.
\end{gather}
Since this identity must hold for all $\Theta$, the coefficient of $\sin\Theta$ must vanish. Therefore,
\begin{gather}
  f'\left( R \right) +\left( \frac{1}{R} + a'\left( 0 \right) R \right) f\left( R \right) +b'\left( 0 \right) R =0,\label{b}
\end{gather}
where we have already incorporated the conditions $a(0)=1$ and $b(0)=0$.

The general solution to (\ref{b}) is given by
\begin{equation}
  f\left( R \right) R=-\frac{b'\left( 0 \right)}{a'\left( 0 \right)}R+\sqrt{\frac{\pi}{2}}\frac{b'\left( 0 \right) e^{-a'\left( 0 \right) R^2/2}}{a'\left( 0 \right) ^{3/2}}\operatorname{erfi}\left( \sqrt{\frac{a'\left( 0 \right)}{2}}R \right) +Ce^{-a'\left( 0 \right) R^2/2},
\end{equation}
where $C\in\R$ is an arbitrary constant.
Here, the imaginary error function is defined as
\begin{equation}
  \operatorname{erfi}(x) = \frac{2}{\sqrt{\pi}} \int_0^x e^{\alpha^2} \,\mathrm{d}\alpha.
\end{equation}

To ensure that the Hamiltonian is well-defined, $f(R)R$ must be zero at $R=0$. This imposes the condition:
\begin{gather}
  C=0.
\end{gather}

Thus, the Hamiltonian takes the form:
\begin{align}
  H\left( R,\Theta ,r \right) &=\left( a\left( r \right) f\left( R \right) R+b\left( r \right) R \right) \sin \Theta\notag  \\
  &=\left( a\left( r \right) \left( -\frac{b'\left( 0 \right)}{a'\left( 0 \right)}+\sqrt{\frac{\pi}{2}}\frac{b'\left( 0 \right) e^{-a'\left( 0 \right) R^2/2}}{a'\left( 0 \right) ^{3/2}}\mathrm{erfi}\left( \sqrt{\frac{a'\left( 0 \right)}{2}}R \right) \right) R+b\left( r \right) R \right) \sin \Theta .
\end{align}

To achieve the desired flow behavior, we choose:
\begin{gather}
  a\left( r \right) =e^{2r}, \quad b\left( r \right) =1-e^r,  \\
  a'\left( 0 \right) =2, \quad b'\left( 0 \right) =-1.
\end{gather}

Substituting these into our Hamiltonian, we obtain
\begin{gather}
  H\left( R,\Theta ,r \right) =\left( \frac{1}{2}e^{2r}\left( 1-\frac{e^{-R^2}}{R}
  \int_0^R e^{\alpha^2} \,\mathrm{d}\alpha \right) +1-e^r \right) R \sin \Theta.
\end{gather}

Equivalently, in Cartesian coordinates $(u,v,r)$,
\begin{equation}
  H\left( u,v,r \right) =\left( \frac{1}{2}e^{2r} \left( 1-\frac{e^{-(u^2+v^2)}}{\sqrt{u^2+v^2}}
  \int_0^{\sqrt{u^2+v^2}} e^{\alpha^2} \,\mathrm{d}\alpha \right) +1-e^r \right) v,
\end{equation}
or in complex coordinates $(w,y)$,
\begin{equation}
  H\left( w,y \right) =\left( \frac{1}{2}e^{-2/\left| y \right|}\left( 1-\frac{e^{-\left| w \right|^2}}{\left| w \right|}\int_0^{\left| w \right|}{e^{\alpha ^2}\mathrm{d}\alpha} \right) +1-e^{-1/\left| y \right|} \right) \mathrm{Im}\left( w \right) .\label{27}
\end{equation}

\section{Verification of the Hamiltonian flow}

The Hamiltonian vector field of (\ref{27}) is, by a direct computation
\begin{align}
X_H=&\frac{1}{2}\left( \left( 1-e^r \right) \left( R^2+u^2 \right) +e^{2r}u^2+e^{2r}\left( -u^2+2R^2v^2 \right) \frac{e^{-R^2}}{R}\int_0^R{e^{\alpha ^2}\mathrm{d}\alpha} \right) \frac{1}{R^2}\;\partial _u\notag
\\
&+\frac{1}{2}e^{2r}\left( 1-\left( 1+2R^2 \right) \frac{e^{-R^2}}{R}\int_0^R{e^{\alpha ^2}\mathrm{d}\alpha} \right) \frac{uv}{R^2}\;\partial _v\notag
\\
&+\left( e^r-e^{2r}\left( 1-\frac{e^{-R^2}}{R}\int_0^R{e^{\alpha ^2}\mathrm{d}\alpha} \right) \right) v\;\partial _{\theta}
\end{align}

We can directly check this vector field is well defined. Especially, at $w\to 0$,
\begin{equation}
  \lim_{(u,v)\to(0,0)}{X_H(u,v,r)}=(1-e^r)\partial_u.
\end{equation}
which is expected. We can also see that the flow in the base, shown in Fig.~\ref{MMA1} does obey the descriptions in Fig.~\ref{52}.

We conclude this paper by presenting numerical computations and plots that directly verify that our Hamiltonian realizes the equivalence of the two Lagrangians. Since it is impossible to visualize a four-dimensional flow in a figure, we project the dynamics onto lower-dimensional subspaces by suppressing some of the parameters.

The only difference between this flow and the expected behavior described above is that, over finite time, the angles of rotation for the points $b$ and $d$ in Fig.~\ref{7} may be larger than those for the points $a$ and $e$ at the top, as shown in Figs.~\ref{MMA2} and \ref{MMA3}. However, this discrepancy is resolved as $t\to\infty$.
The only difference of this flow from the expected behavior we described above is that, under finite time, the angle that the points $b$ and $d$ in Fig.~\ref{7} rotate may be larger than $a$ and $e$ at the top, as shown in Figs.~\ref{MMA2} and \ref{MMA3}. But this is not a problem since it will be resolved as $t\to \infty$.

\begin{figure}[H]
\centering
\begin{subfigure}{0.49\textwidth}
  \centering
  \includegraphics[width=\textwidth]{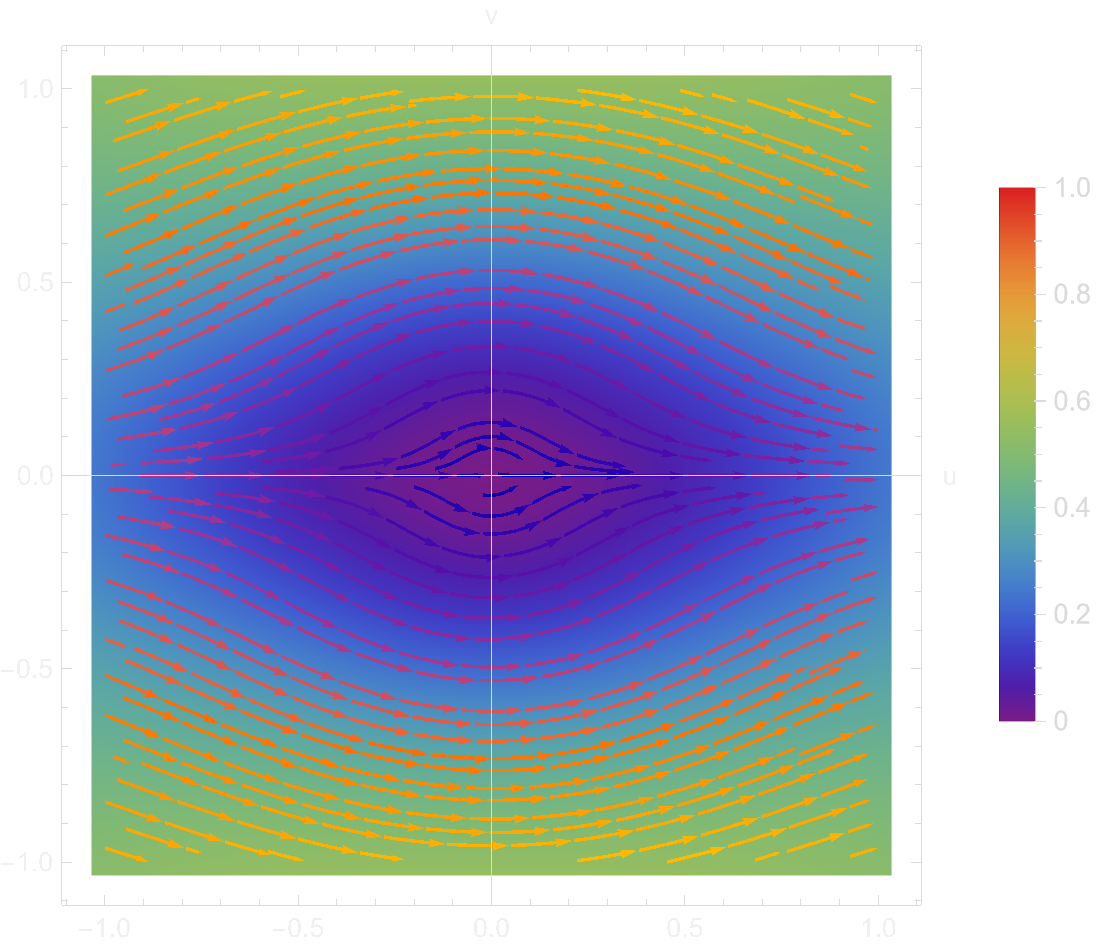}
  \caption{$r=0$.}
\end{subfigure}
\hfill
\begin{subfigure}{0.49\textwidth}
  \centering
  \includegraphics[width=\textwidth]{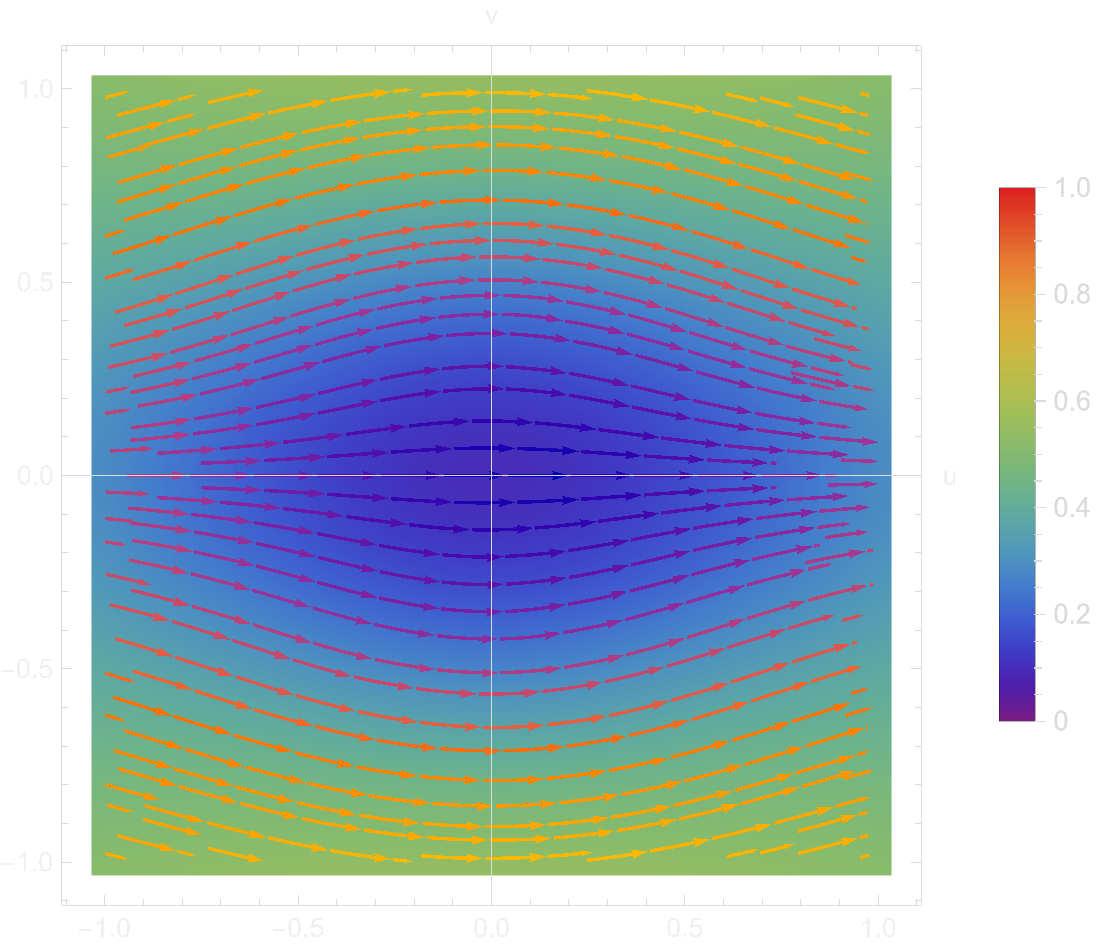}
  \caption{$r=-0.1$.}
\end{subfigure}\\
\begin{subfigure}{0.49\textwidth}
  \centering
  \includegraphics[width=\textwidth]{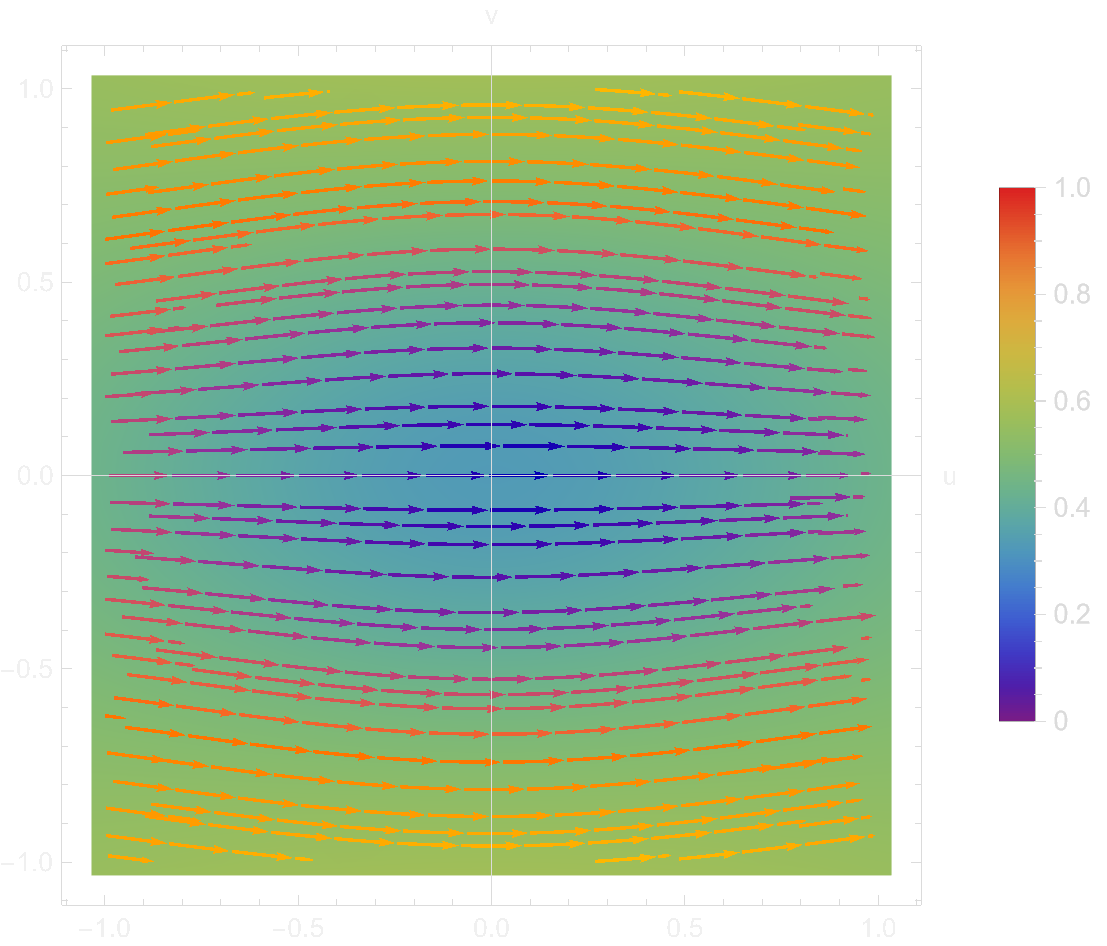}
  \caption{$r=-0.4$.}
\end{subfigure}
\hfill
\begin{subfigure}{0.49\textwidth}
  \centering
  \includegraphics[width=\textwidth]{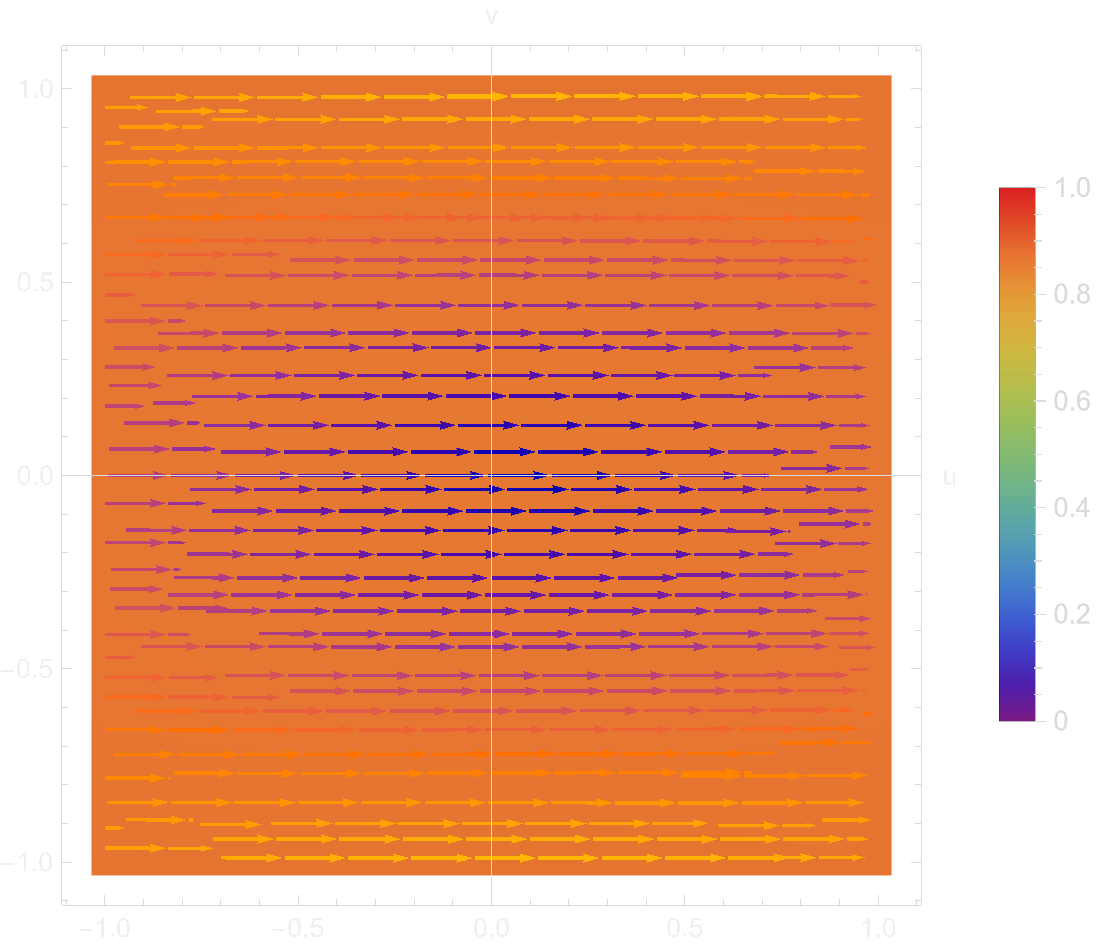}
  \caption{$r=-2$.}
\end{subfigure}
\caption{Flow in the base $\C_w$ at various $r$. Colors represent the speed of the flow.}\label{MMA1}
\end{figure}

\begin{figure}[H]
\centering
\begin{subfigure}{0.47\textwidth}
  \centering
  \includegraphics[width=\textwidth]{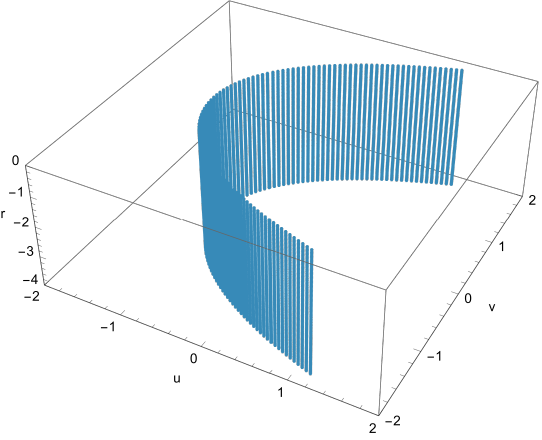}
  \caption{$t=0$.}
\end{subfigure}
\hfill
\begin{subfigure}{0.47\textwidth}
  \centering
  \includegraphics[width=\textwidth]{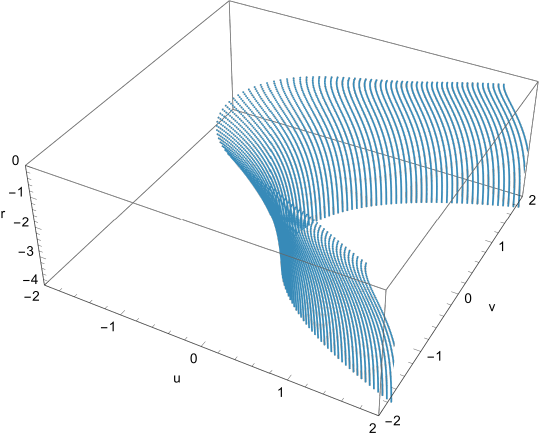}
  \caption{$t=1$.}
\end{subfigure}\\
\begin{subfigure}{0.47\textwidth}
  \centering
  \includegraphics[width=\textwidth]{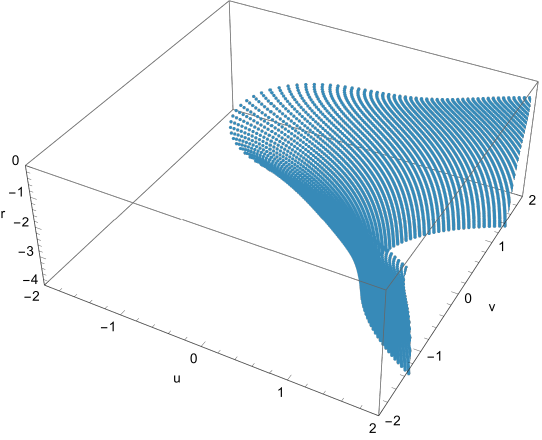}
  \caption{$t=2$.}
\end{subfigure}
\hfill
\begin{subfigure}{0.47\textwidth}
  \centering
  \includegraphics[width=\textwidth]{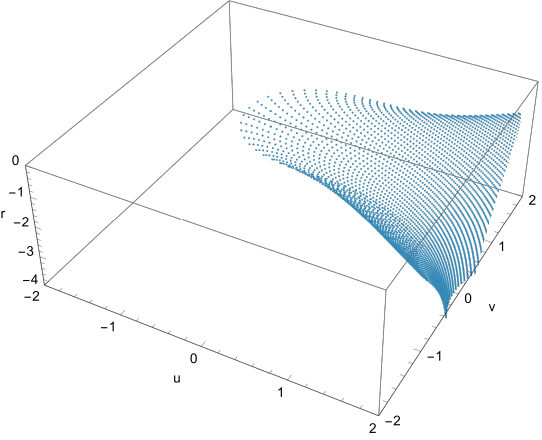}
  \caption{$t=3$.}
\end{subfigure}\\
\begin{subfigure}{0.47\textwidth}
  \centering
  \includegraphics[width=\textwidth]{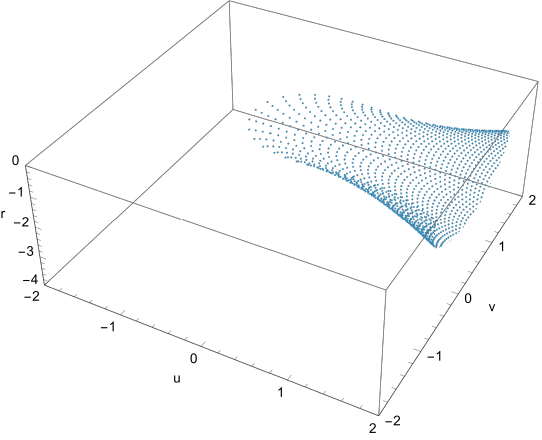}
  \caption{$t=4$.}
\end{subfigure}
\hfill
\begin{subfigure}{0.47\textwidth}
  \centering
  \includegraphics[width=\textwidth]{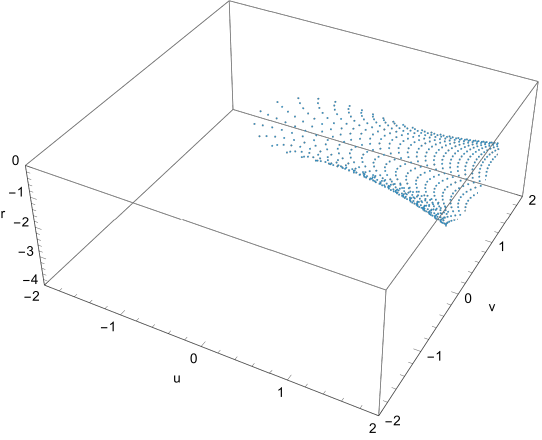}
  \caption{$t=5$.}
\end{subfigure}
\caption{Projection of the image of Lagrangian $U$ under the flow $\varphi_t$ generated by $X_H$ onto the $(u,v,r)$-coordinates.}
\end{figure}

\begin{figure}[H]
\centering
\begin{subfigure}{0.47\textwidth}
  \centering
  \includegraphics[width=\textwidth]{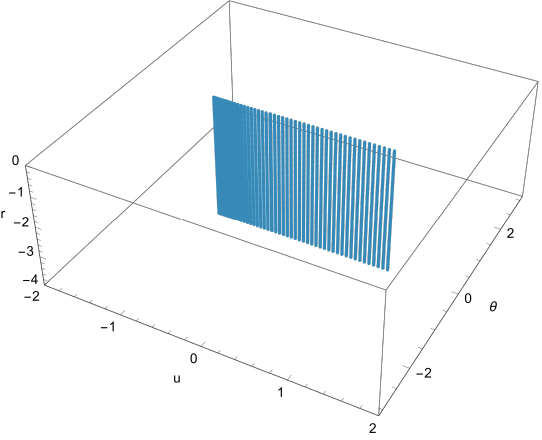}
  \caption{$t=0$.}
\end{subfigure}
\hfill
\begin{subfigure}{0.47\textwidth}
  \centering
  \includegraphics[width=\textwidth]{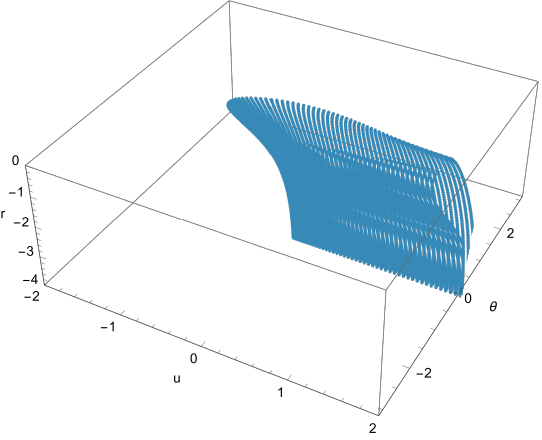}
  \caption{$t=1$.}
\end{subfigure}\\
\begin{subfigure}{0.47\textwidth}
  \centering
  \includegraphics[width=\textwidth]{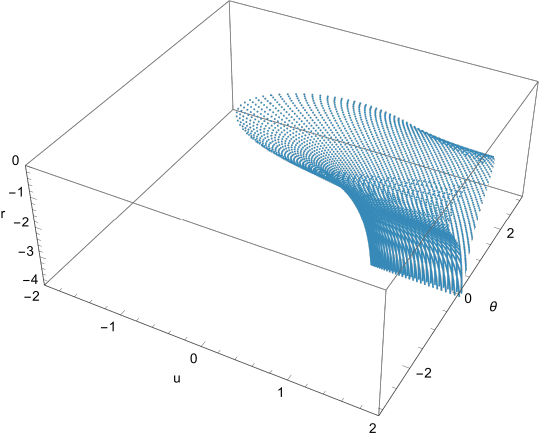}
  \caption{$t=2$.}
\end{subfigure}
\hfill
\begin{subfigure}{0.47\textwidth}
  \centering
  \includegraphics[width=\textwidth]{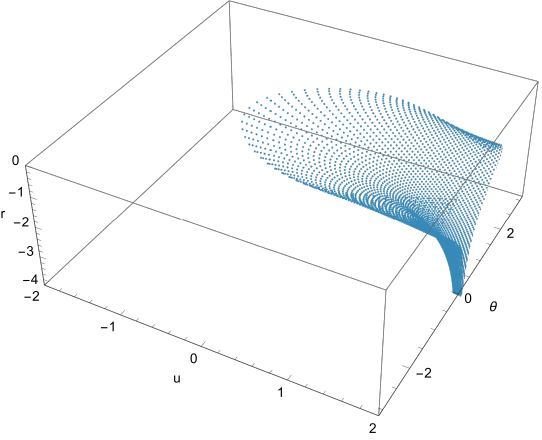}
  \caption{$t=3$.}
\end{subfigure}\\
\begin{subfigure}{0.47\textwidth}
  \centering
  \includegraphics[width=\textwidth]{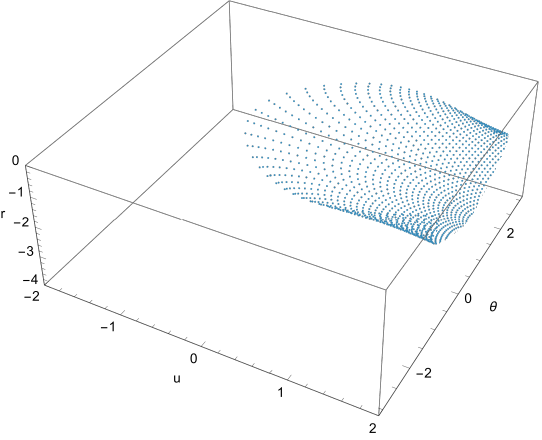}
  \caption{$t=4$.}
\end{subfigure}
\hfill
\begin{subfigure}{0.47\textwidth}
  \centering
  \includegraphics[width=\textwidth]{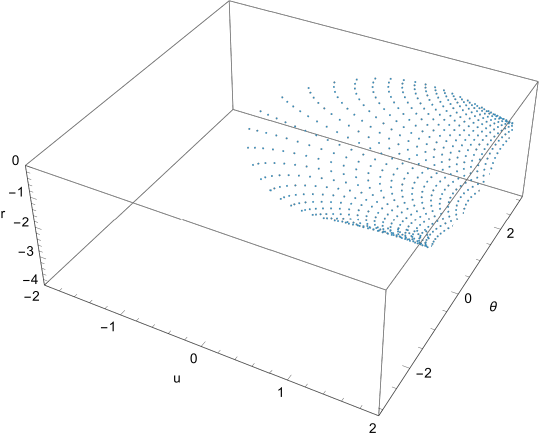}
  \caption{$t=5$.}
\end{subfigure}
\caption{Projection of the image of Lagrangian $U$ under the flow $\varphi_t$ onto the $(u,\theta,r)$-coordinates. Note that the boundaries $\theta=-\pi$ and $\theta=\pi$ of these cuboids are to be identified.}
\label{MMA2}
\end{figure}

\begin{figure}[H]
\centering
\begin{subfigure}{0.26\textwidth}
  \centering
  \includegraphics[width=\textwidth]{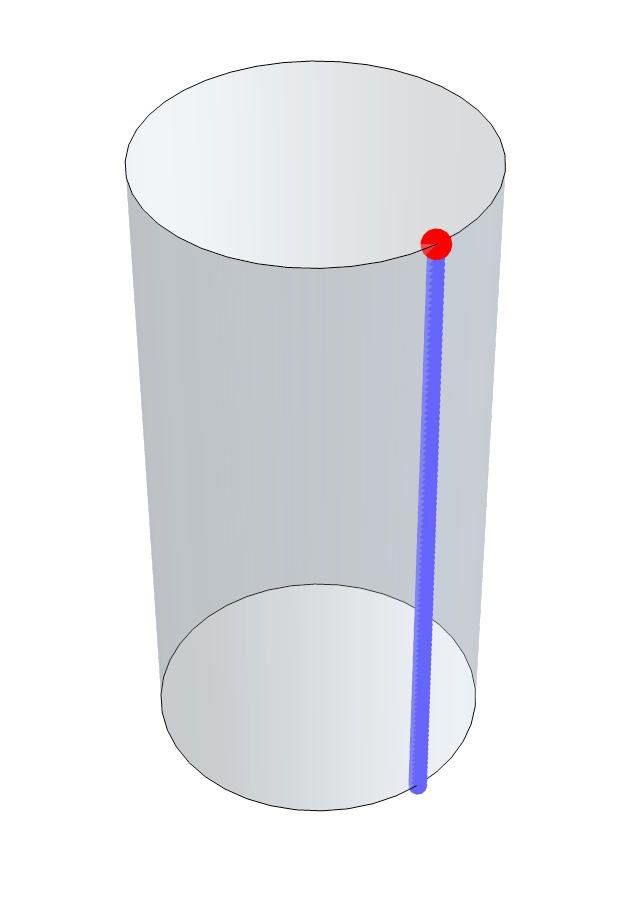}
  \caption{$t=0$.}
\end{subfigure}
\hfill
\begin{subfigure}{0.26\textwidth}
  \centering
  \includegraphics[width=\textwidth]{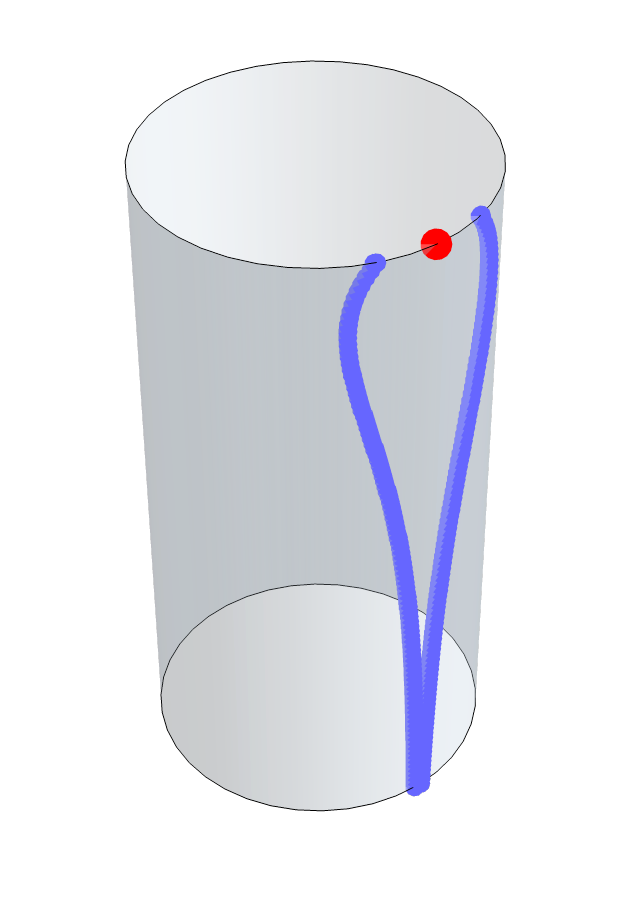}
  \caption{$t=1$.}
\end{subfigure}
\hfill
\begin{subfigure}{0.26\textwidth}
  \centering
  \includegraphics[width=\textwidth]{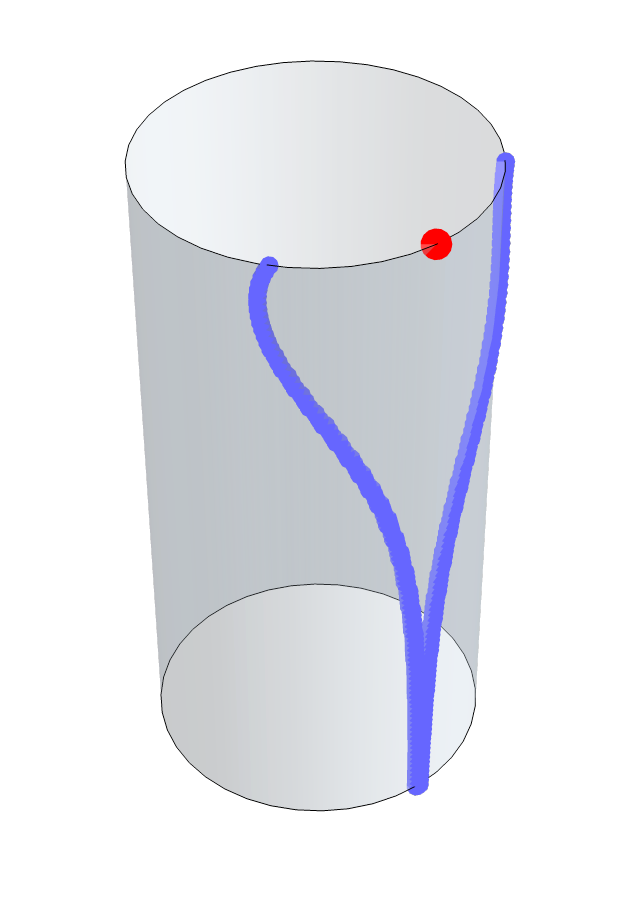}
  \caption{$t=2$.}
\end{subfigure}
\\
\begin{subfigure}{0.26\textwidth}
  \centering
  \includegraphics[width=\textwidth]{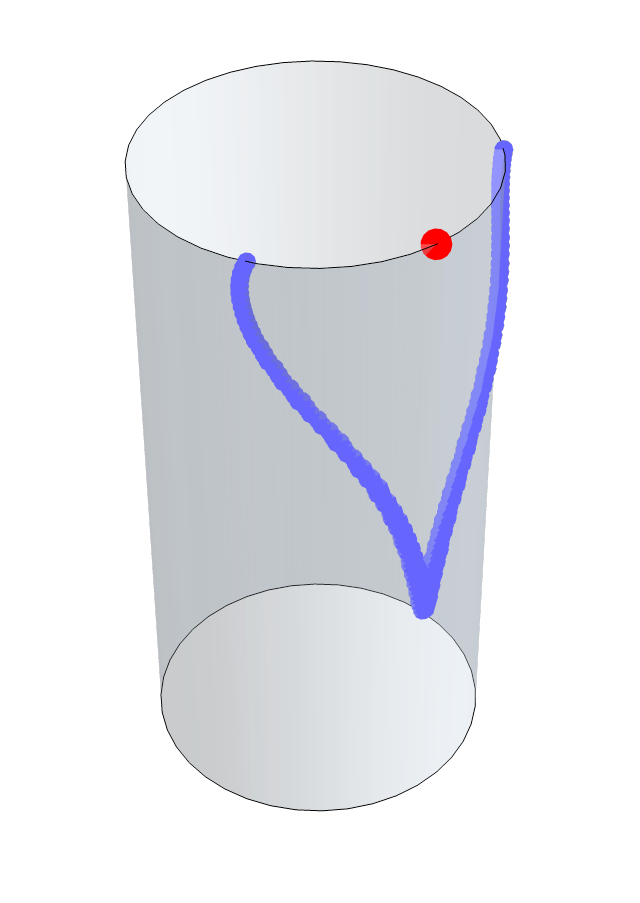}
  \caption{$t=2.2$.}
\end{subfigure}
\hfill
\begin{subfigure}{0.26\textwidth}
  \centering
  \includegraphics[width=\textwidth]{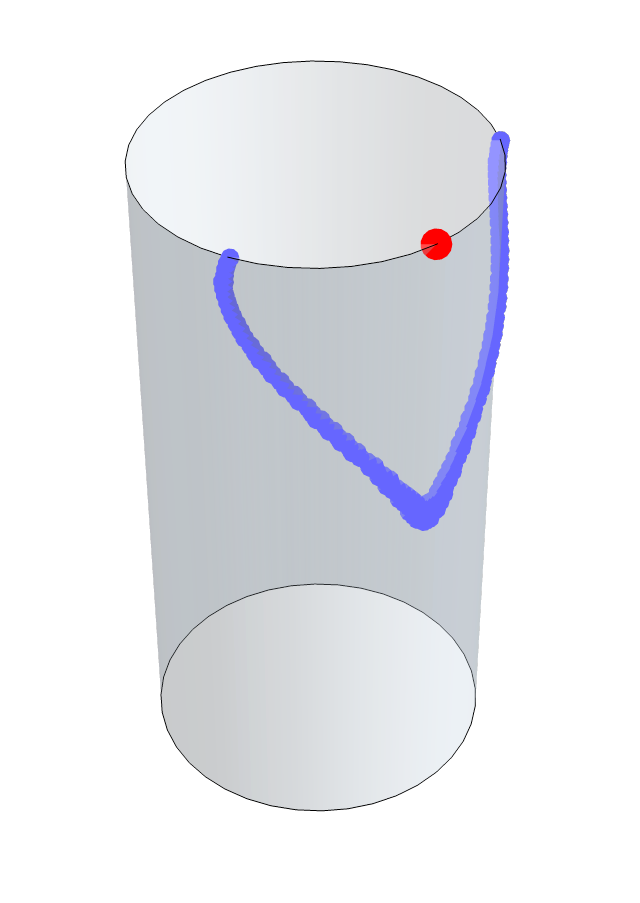}
  \caption{$t=2.4$.}
\end{subfigure}
\hfill
\begin{subfigure}{0.26\textwidth}
  \centering
  \includegraphics[width=\textwidth]{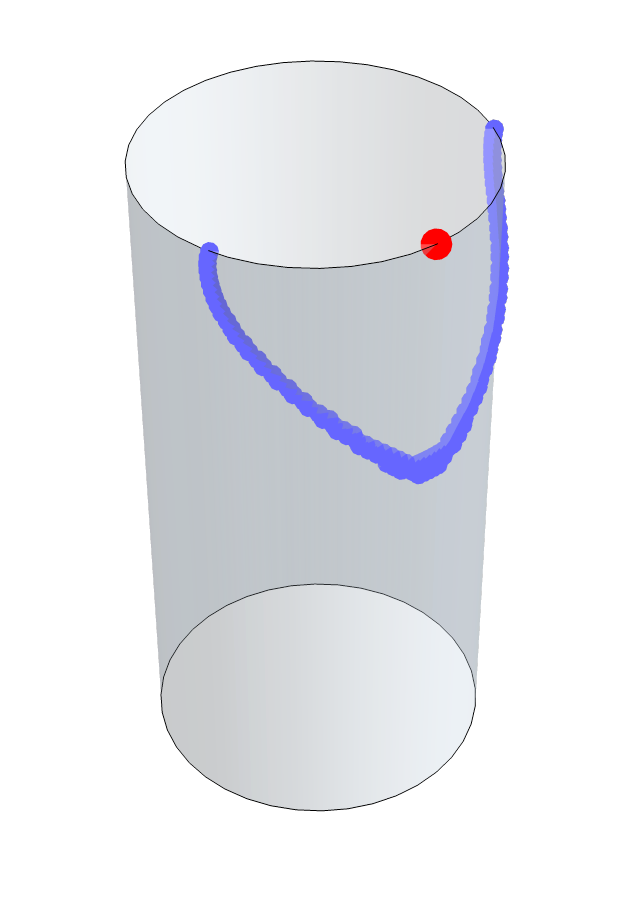}
  \caption{$t=2.6$.}
\end{subfigure}
\\
\begin{subfigure}{0.26\textwidth}
  \centering
  \includegraphics[width=\textwidth]{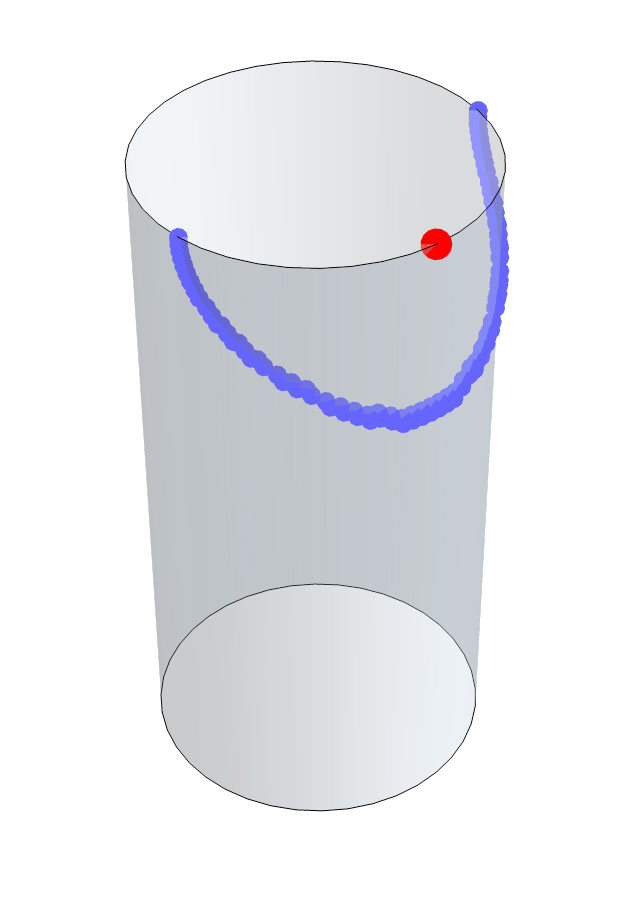}
  \caption{$t=3$.}
\end{subfigure}
\hfill
\begin{subfigure}{0.26\textwidth}
  \centering
  \includegraphics[width=\textwidth]{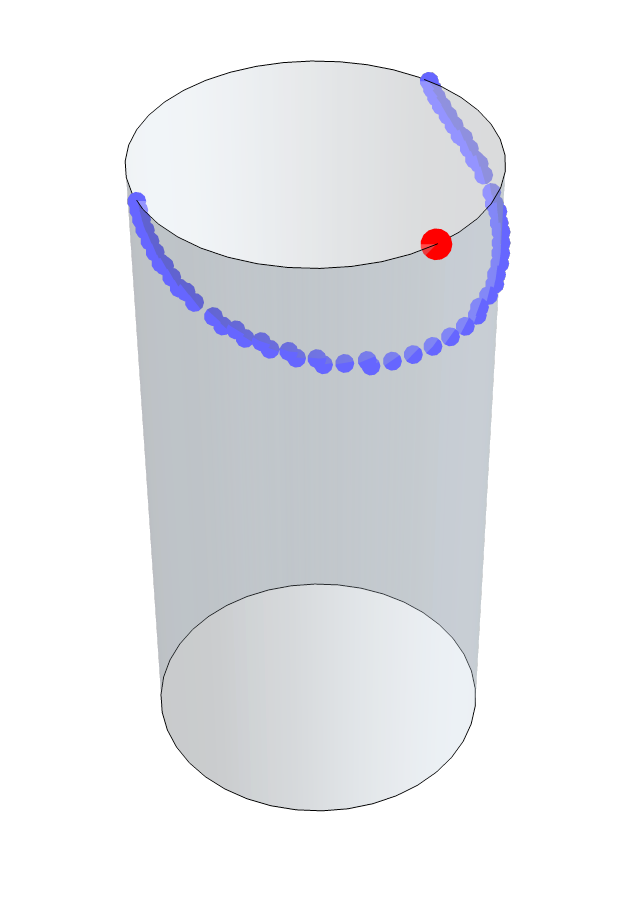}
  \caption{$t=4$.}
\end{subfigure}
\hfill
\begin{subfigure}{0.26\textwidth}
  \centering
  \includegraphics[width=\textwidth]{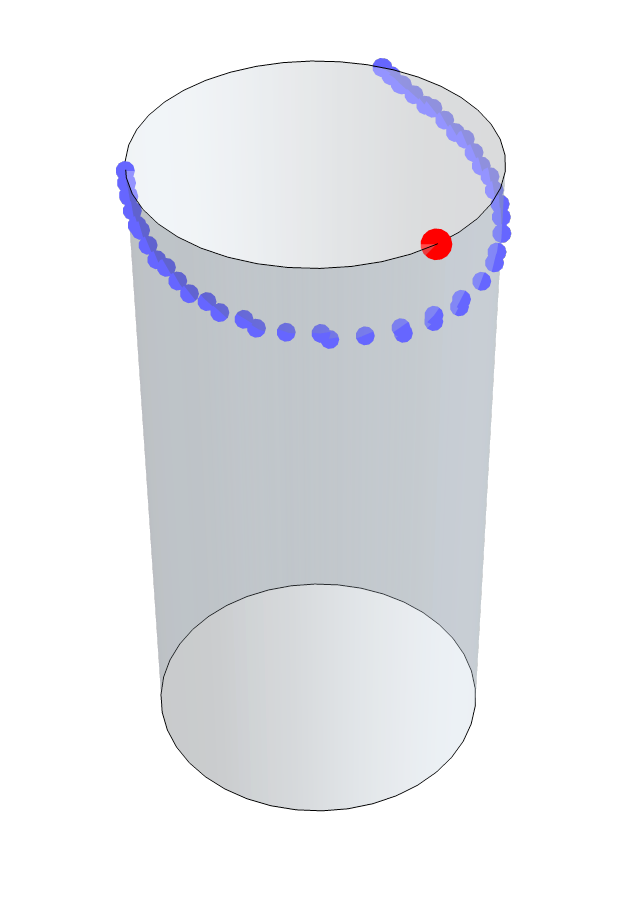}
  \caption{$t=5$.}
\end{subfigure}
\caption{The emergence of $I$-fiber. This is the projection of the points with $u=1$ in the image of Lagrangian $U$ under the flow $\varphi_t$ onto the fiber $\Cx_y$. The $v$-coordinate is suppressed. Note that the points lie in different fibers for finite $t$ and coincide only as $t \to \infty$.}

\label{MMA3}
\end{figure}

\bibliographystyle{plain}
\bibliography{ref}

\begin{thebibliography}{13}


\bibitem{Mina}
Mina Aganagic.
\newblock Knot categorification from mirror symmetry, part {II}: Lagrangians.
\newblock {\em arXiv:2105.06039}.

\bibitem{ADLSZ}
Mina Aganagic, Ivan Danilenko, Yixuan Li, Vivek Shende, and Peng Zhou.
\newblock Quiver {H}ecke algebras from {F}loer homology in {C}oulomb branches.
\newblock \emph{arXiv:2406.04258}.

\bibitem{BFN}
Alexander Braverman, Michael Finkelberg, and Hiraku Nakajima.
\newblock Towards a mathematical definition of {C}oulomb branches of 3-dimensional n=4 gauge theories, {II}.
\newblock {\em Advances in Theoretical and Mathematical Physics}, 22(5):1071--1147, 2018.

\bibitem{Elise}
Elise LePage and Vivek Shende.
\newblock Aganagic's invariant is Khovanov homology.
\newblock {\em arXiv:2505.00327}.

\bibitem{MS}
Cheuk Yu Mak and Ivan Smith.
\newblock Fukaya--Seidel categories of Hilbert schemes and parabolic category $\mathcal{O}$,
\newblock {\em Journal of the European Mathematical Society}, 24(9):3215--3332, 2021.

\bibitem{KL1}
Mikhail Khovanov and Aaron~D. Lauda.
\newblock A diagrammatic approach to categorification of quantum groups~I.
\newblock {\em Representation Theory}, 13(14):309--347, 2009.

\bibitem{KL2}
Mikhail Khovanov and Aaron~D. Lauda.
\newblock A diagrammatic approach to categorification of quantum groups~II.
\newblock {\em Transactions of the American Mathematical Society}, 363(5):2685--2700, 2011.


\bibitem{SS}
Paul Seidel and Ivan Smith.
\newblock A link invariant from the symplectic geometry of nilpotent slices.
\newblock {\em Duke Math. J.}, 131(1):453--514, 2006.

\bibitem{Thomas}
Richard P. Thomas.
\newblock An exercise in mirror symmetry.
\newblock In \emph{Proceedings of the International Congress of Mathematicians 2010 (ICM 2010) (In 4 Volumes) Vol. I: Plenary Lectures and Ceremonies, Vols. II--IV: Invited Lectures}, 
\newblock pages 624--651.
\newblock World Scientific, 2010.

\bibitem{Webster1}
Ben Webster.
\newblock Knot invariants and higher representation theory.
\newblock \emph{arXiv:1309.3796}.

\bibitem{Webster2}
Ben Webster.
\newblock Tensor product algebras, Grassmannians and Khovanov homology.
\newblock \emph{Physics and Mathematics of Link Homology} \textbf{680}, 23--58 (2016).

\end{thebibliography}

\end{document}